\newtheorem{theorem}{Theorem}[section]
\newtheorem{lemma}[theorem]{Lemma}
\newtheoremstyle{note}
  {3pt}
  {3pt}
  {}
  {\parindent}
  {\bfseries}
  {.}
  {.5em}
  {}
\theoremstyle{note}
\newcommand{\N}{ \ensuremath{ \mathbb{N} } }
\def\R{\mathbb{R}}
\def\N{{\mathbb{N}}}
\def\Z{{\mathbb{Z}}}
\def\1{\mathsf 1}
\newcommand{\me}{\mathbb{E}}
\newcommand{\prob}{\mathbb{P}}
\newcommand{\Sub}{\mathsf{Sub}}
\newcommand{\Fin}{\mathsf{Fin}}
\newcommand{\s}{\mathcal{S}}
\newcommand{\heap}[2]{\genfrac{}{}{0pt}{}{#1}{#2}}
\newcommand{\sfrac}[2]{\mbox{$\frac{#1}{#2}$}}
\newcommand{\ssup}[1] {{\scriptscriptstyle{({#1}})}}
\newtheorem{remark}[theorem]{Remark}
\long\def\reminder#1{}
\newcommand{\C}[1]{\leavevmode\ifmmode {\mathbf #1} \else
\textbf{#1} \fi}
\renewcommand{\C}[1]{#1}
\newcommand{\includegrafix}[2][]{\ifthenelse{\boolean{colour}}{\includegraphics[#1]{#2_col}}{\includegraphics[#1]{#2_bw}}}
\begin{document}

\title{Trap models with vanishing drift:\\ Scaling limits and ageing regimes}
\author{Nina Gantert, Peter M\"orters, Vitali Wachtel }

\maketitle

\begin{abstract} 
\noindent We discuss the long term behaviour of trap models on the integers with asymptotically
vanishing drift, providing scaling limit theorems and ageing results. Depending on the tail behaviour of 
the traps and the strength of the drift, we identify three different regimes, one of which 
features a previously unobserved limit process. 
\end{abstract}

\section{Introduction}

Trap models are a particularly simple class of stochastic processes in random environment, which
have recently attracted a lot of attention. To describe the setup of most trap models, suppose a
graph with finite degree is given. To each of the vertices~$v$ of the graph  we associate an 
independent random variable~$\tau_v$ chosen according to a suitable class of heavy-tailed 
distributions. Given this random environment, the trap model is a continuous-time nearest neighbour
random walk on the graph such that the exponential holding time at a vertex~$v$ has a mean proportional to $\tau_v$. 
Therefore vertices~$v$ with large values~$\tau_v$ act as \emph{traps} in which the random walk spends 
a larger amount of time than in vertices with small values of~$v$. Different trap models arise by
varying the underlying graph and the drift of the random walk. \medskip 

The main purpose of trap models is to serve as a phenomenological model describing how a physical system out
of equilibrium moves in an energy landscape. Here vertices with large trap values represent energetically favourable
states in which the system tends to remain for longer. Most results on trap models are about the phenomenon of~\emph{ageing}, 
which means that in such a system the time spans during which the system does not change its state are increasing as
the system gets older. Trap models offer a simple explanation for ageing: Roughly speaking, the older the system, the more
space it has explored, and therefore the deeper the trap it is stuck in. Let us also mention here some interesting papers 
exhibiting the ageing phenomenon in some other models, for example for spherical spin glasses~(\cite{BDG01}), the 
random energy model with Glauber dynamics~(\cite{BBG03}) and the parabolic Anderson model with heavy tailed 
potential~(\cite{MOS09}).\medskip

\pagebreak[3]

Trap models were introduced into the physics literature by~\cite{Bou92} and interest
in the mathematical community was created through the pioneering work of~\cite{FIN} and~\cite{BCM}.
An excellent survey over the mathematical literature on trap models 
is provided in the lecture notes of~\cite{BC}.  \medskip

\pagebreak[3]

Understanding the ageing phenomenon is closely linked to scaling limit theorems for the trap models. 
For driftless trap models on the lattice~$\Z^d$ it was shown that, on suitable path spaces,
\begin{itemize}
\item if $d=1$ the rescaled trap model converges to a singular diffusion without drift, which is often
called the \emph{Fontes-Isopi-Newman diffusion}, see~\cite{FIN};
\item if $d\ge 2$ the rescaled trap model converges to the \emph{fractional-kinetics process}, which is a
self-similar non-Markovian process, obtained as the time change of a $d$-dimensional Brownian motion by the 
inverse of an independent stable subordinator, see~\cite{BC07}. 
\end{itemize}
More recently, \cite{BC09} identified the fractional-kinetics process as the scaling limit for a class
of random walks with unbounded conductances and for the so-called non-symmetric trap models on $\Z^d$, $d\ge 3$, which
have a drift depending locally on the trap environment.  
\medskip

In the present paper we focus on trap models on~$\Z$ with a drift, which does not depend on the trap environment, addressing 
a question posed in~\cite{BC}. In our first main result, Theorem~\ref{scaling},  we look at the scaling limits of trap models with an
asymptotically vanishing drift, and identify three regimes:
\begin{itemize}
\item In a regime where the drift vanishes \emph{slowly}, the rescaled trap model converges to the inverse 
of a stable subordinator. 
\cite{Z}, using a different method of proof, identified the same process as the 
scaling limit for trap models with constant drift. 
\item In a regime where the drift vanishes \emph{quickly} the rescaled trap model converges to a Fontes-Isopi-Newman 
diffusion, the same process as in the driftless case.
\item In a critical \emph{intermediate} regime the rescaled trap model converges to a singular diffusion
with drift, which we call the Fontes-Isopi-Newman diffusion with drift. This process has not been 
identified as limit process in any other case before.
\end{itemize}

Our second main result, stated as Theorems~\ref{ageing1} and~\ref{ageing2}, refer 
to the ageing behaviour in trap models on~$\Z$ with vanishing drift. To this end we
study the asymptotics of the depth of the trap in which the particle is at any given time, or, in other words, 
the environment from the 
point of view of the particle. This allows us to identify an \emph{ageing exponent} $0<\gamma\le 1$ such 
that the  probability
$$\prob\big\{ X_{t}=X_{t+s} \mbox{ for all } 0\le s\le t^\gamma\big\} \qquad \mbox{ as $t\uparrow\infty$,}$$
(averaged over the trap environment) converges to a value strictly between zero and one. Again there is a qualitatively different behaviour between
the case of slowly vanishing drift on the one hand, and rapidly vanishing and critical drift on the other.
Only in the case of constant drift do we have an ageing 
exponent~$\gamma=1$, all regimes with vanishing drift (as $t\uparrow\infty$) lead to sublinear ageing, 
i.e. exponents~$\gamma<1$. This is in marked contrast to the behaviour of the two-point function 
$\prob\{ X_t=X_{t+t^\gamma}\}$ for which we expect a nontrivial limit when $\gamma=1$ in all cases, 
a fact which is rigorously established in the driftless case in~\cite{FIN} and in the case of fixed drift in~\cite{Z}. 
\medskip

In the following section we give the precise formulation of our main results. We then proceed to prove our
scaling limit theorems in the three regimes in Sections~\ref{ch.scale.a}, \ref{ch.scale.b} and ~\ref{ch.scale.c}, 
and the two regimes of ageing results in Section~\ref{ch.age1} and~\ref{ch.age2}.
\medskip
 
\section{Statement of the main results}

Fix $0<\alpha<1$ and let $(\tau_z \colon z\in\Z)$ be  an independent family of random variables 
with 
\begin{equation}\label{traptail}
\lim_{x\uparrow\infty} x^{\alpha}\, P\{ \tau_z> x\} = 1\, .\end{equation}
Given this \emph{trap environment} and jump probabilities $p,q\in[0,1]$, $q=1-p$, we define the 
Markov chain on $\Z$ with transition rates
$$q_{i,i+1}= p\,\tau_i^{-1} , \qquad q_{i,i-1}= q\,\tau_i^{-1}\, .$$
This is called the \emph{(symmetric) trap model with drift}. We are mostly concerned
with limit theorems for these processes in the case of vanishing drift. We therefore 
suppose that $\mu\ge 0$, $\beta\ge 0$ and $X^{\ssup N}=(X^{\ssup N}_t \colon t\ge 0)$ is defined by 
$X^{\ssup N}_t=X_{Nt}$, where $X=(X_t \colon t\ge 0)$ is a trap model with jump probabilities
$$p^{\ssup N}=\frac12\Big(1+ \frac{\mu}{N^\beta}\Big), \quad
q^{\ssup N}=1- p^{\ssup N}= \frac12\Big(1- \frac{\mu}{N^\beta}\Big) .$$
(We take $\mu \leq 1$ if $\beta =0$.). We define the following limiting processes:
\begin{itemize}
\item \emph{Inverse stable subordinator.}\\
For~$0<\alpha<1$ the stable subordinator is
the increasing L\'evy process $(\Sub_t \colon t\ge 0)$ with
$$\me\big[ e^{-\lambda \Sub_t} \big] = \exp\big\{ - t\,\Gamma(1-\alpha)\, \lambda^\alpha\big\}\, .$$
Its right-continuous inverse $(\Sub^{-1}_s \colon s\ge 0)$ defined by
$$\Sub^{-1}_s=\inf\{t>0 \colon \Sub_t>s\}$$ 
is the \emph{inverse stable subordinator} with index~$\alpha$.

\item \emph{Fontes Isopi Newman diffusion with drift $\mu$.}\\
Suppose $(B(t) \colon t\ge 0)$ is a Brownian motion with drift $\mu$ and
$(\ell(t,x) \colon t\ge 0, x\in \R)$ its local times. Let 
$\rho$ be an independent \emph{stable measure} with index $0<\alpha<1$,
defined as the random measure whose cumulative distribution function
is a two-sided stable subordinator with the same index. Define an increasing function
$$\phi(t)=\int \ell(t,x) \, \rho(dx),$$
and its inverse
$$\psi(s)=\inf\{ t>0 \colon \phi(t)>s\}.$$
Then $(\Fin^\mu_s \colon s\ge 0)$ given by
$$\Fin^\mu_s=B(\psi(s))$$
is a \emph{Fontes Isopi Newman diffusion} with drift $\mu$.
\end{itemize}

We always denote by ``$\Longrightarrow$'' convergence in distribution, averaging over the trap environment. 
\begin{theorem}[Scaling limits]\label{scaling}
We have the following limit laws, where ``$\Longrightarrow$'' denotes convergence in distribution on the Skorokhod space~$D[0,1]$
of right-continuous functions with left-hand limits.
\begin{itemize}
\item[(a)] If $0\le\beta<\frac{\alpha}{\alpha+1}$ and $\mu>0$ then
$$\frac{X^{\ssup N}}{N^{\alpha(1-\beta)}} \, \Longrightarrow \frac{\mu^\alpha}{\Gamma(1+\alpha)} \,\Sub^{-1}\, .$$
\item[(b)] If $\beta=\frac{\alpha}{\alpha+1}$ and $\mu>0$ then
$$\frac{X^{\ssup N}}{N^{\beta}} \, \Longrightarrow \Fin^\mu\, .$$
\item[(c)] If $\beta>\frac{\alpha}{\alpha+1}$ or $\mu=0$ then
$$\frac{X^{\ssup N}}{N^{\frac{\alpha}{\alpha+1}}} \, \Longrightarrow \Fin^0\, .$$
\end{itemize}
\end{theorem}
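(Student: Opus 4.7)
My plan is to use the standard decomposition $X_t = Y_{S^{-1}(t)}$, where $Y=(Y_n)$ is the embedded discrete-time asymmetric random walk with right-jump probability $p^{\ssup N}$, and $S_n = \sum_{k=0}^{n-1}\tau_{Y_k}\xi_k$ accumulates the $\mathrm{Exp}(1)$ holding times $(\xi_k)$ weighted by the traps. In each regime the task is to establish joint convergence of the rescaled skeleton walk $Y$ and the rescaled clock $S$, and then to recover the limit of $X^{\ssup N}$ by inverting the clock via the continuous mapping theorem.

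For regime (a) the key observation is that, when $\beta < \alpha/(\alpha+1)$, the drift $\mu/N^\beta$ dominates the diffusive fluctuations on the relevant time scale: over the $\sim N^\beta a_N/\mu$ skeleton steps needed to reach the spatial scale $a_N = N^{\alpha(1-\beta)}$, the standard deviation $\sqrt{n}$ is of smaller order than $a_N$. The walk is therefore essentially ballistic, and each site $k \in [0, a_N s]$ is visited a geometric number of times $L_k$ with success probability $p^{\ssup N}-q^{\ssup N}=\mu/N^\beta$, independently in $k$. The classical identity that a geometric sum of iid $\mathrm{Exp}(1)$ variables is $\mathrm{Exp}$ of the success probability then shows that the total time spent at $k$ has the law of $\tau_k\zeta_k$ with $\zeta_k\sim\mathrm{Exp}(\mu/N^\beta)$ independent of $\tau_k$. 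The products $\tau_k\zeta_k$ have polynomial tail $\sim \Gamma(1+\alpha)(N^\beta/\mu)^\alpha x^{-\alpha}$, placing them in the domain of attraction of the stable law of index $\alpha$; the partial sums $\Sigma_m = \sum_{k<m}\tau_k\zeta_k$, rescaled by $N/\mu$, converge on the time scale $m = a_N s$ to $\Sub_s$ up to the explicit constant. Inverting then yields the asserted limit $X^{\ssup N}/a_N \Longrightarrow (\mu^\alpha/\Gamma(1+\alpha))\Sub^{-1}$.

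For regimes (b) and (c) the skeleton walk is diffusive on the spatial scale $a_N = N^{\alpha/(\alpha+1)}$. Setting $T_N = a_N^2$, the rescaled skeleton $u \mapsto Y_{\lfloor T_N u\rfloor}/a_N$ converges to a Brownian motion with drift $\mu$ in (b) and drift $0$ in (c); in (b) this relies on $T_N\cdot(\mu/N^\beta)/a_N = \mu$ holding exactly when $\beta = \alpha/(\alpha+1)$, while in (c) the corresponding product tends to $0$. Jointly, the discrete local times $L_k^{\ssup N}(u)=\#\{j\leq T_N u: Y_j = k\}$ rescale to the local time $\ell(u,x)$ of the limiting diffusion, and the trap environment $a_N^{-1/\alpha}\sum_k \tau_k\delta_{k/a_N}$ converges to the stable random measure $\rho$. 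The rescaled clock can be rewritten, after absorbing an LLN correction that tends to $1$, as
\[
\frac{S_{\lfloor T_N u\rfloor}}{N} \;\approx\; \sum_{k\in\Z}\frac{\tau_k}{a_N^{1/\alpha}}\cdot\frac{L_k^{\ssup N}(u)}{a_N},
\]
which converges to $\phi(u) = \int \ell(u,x)\,\rho(dx)$ by pairing the convergent point measure against the uniformly convergent local time. Inverting $\phi$ produces the diffusions $\Fin^\mu$ and $\Fin^0$ of (b) and (c), respectively.

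The main technical obstacle I anticipate is in regime (b), where the combination of a surviving drift, the local times of a drifted Brownian motion, and the stable environment measure $\rho$ has not previously appeared; one must check joint convergence in a topology that makes the clock inversion almost surely continuous, extending the Fontes--Isopi--Newman framework to the non-symmetric setting. In regime (a) a secondary obstacle is to confirm that the heavy-tailed limit of $\Sigma_m$ is not spoiled by the (small but nonzero) fluctuations of the skeleton walk around its deterministic drift---in particular, that each anomalously large trap in the range $[0,a_N s]$ is indeed visited close to the expected $N^\beta/\mu$ times, so that the replacement of the true occupation times by iid $\mathrm{Exp}(\mu/N^\beta)$ variables is asymptotically valid.
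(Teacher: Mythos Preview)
Your strategy is sound and the identified obstacles are the right ones, but the paper's execution differs from yours in each regime in ways worth noting.

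For regime~(a), both you and the paper pass to hitting times and show convergence to a stable subordinator, but the order of integration is reversed. You propose to average over the walk first, treating the occupation times $L_k$ as (approximately) iid geometric so that $\zeta_k=\sum_{i\le L_k}\eta_i$ are (approximately) iid exponential, and then to invoke a stable limit theorem for the sequence $\tau_k\zeta_k$. Note that the $L_k$ are \emph{not} literally independent---they form a Markov chain in $k$ with correlations on scale $N^\beta$---so the ``secondary obstacle'' you flag is real and would require controlling the full distribution of the correlated sum, not just its mean. The paper sidesteps this by conditioning on the walk and integrating out the traps $\tau_z$ first: since the traps are \emph{genuinely} iid, the Laplace transform of $T_x/N$ factorises exactly, yielding $\exp\{-\Gamma(1-\alpha)\,\lambda^\alpha N^{-\alpha}\sum_{z<x}(\ell^{\ssup x}(z))^\alpha\,(1+o(1))\}$. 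What remains is only a weak law of large numbers for the correlated sum $\sum_z(\ell^{\ssup x}(z)/N^\beta)^\alpha$, which the paper handles by a direct covariance bound (their Lemma~\ref{schritt3}). Both routes reach the same limit, but the paper's ordering reduces the correlation issue to a second-moment computation rather than a distributional one.

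For regimes~(b) and~(c), the paper takes a more structural route than your componentwise convergence argument. Instead of proving convergence of the rescaled skeleton, its local times, and the environment separately and then assembling the clock, the paper uses Stone's representation: it writes $N^{-\beta}X^{\ssup N}$ (respectively $N^{-\alpha/(\alpha+1)}X^{\ssup N}$) \emph{exactly} as a scale-and-time change $Y[\nu^{\ssup N},S^{\ssup N}]$ of a single Brownian motion, with explicit speed measure $\nu^{\ssup N}$ and scale function $S^{\ssup N}$. Convergence then follows from Stone's theorem once $\nu^{\ssup N}\to\rho$ vaguely and $S^{\ssup N}\to\mathrm{id}$ uniformly on compacts. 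The crucial gain is in~(b): the paper builds the drift $\mu$ into the underlying Brownian motion from the outset (taking $S^{\ssup N}=\mathrm{id}$), so the difficulty you anticipate---extending the Fontes--Isopi--Newman machinery to local times of a drifted Brownian motion and checking continuity of the time-change map in that setting---simply does not arise. Your approach would ultimately succeed but reproves by hand what Stone's theorem already packages.
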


The scaling limit in regime~(c) has been identified by Fontes et al.~(2002) in the
case of the trap model without drift ($\mu=0$); the inverse stable subordinator 
has been observed, using methods different from ours, as a scaling limit in trap models 
with constant drift ($\beta=0$) by~\cite{Z}.  \cite{M04}~has some interesting results for 
the asymptotics $\alpha\downarrow 0$. The diffusion with drift, which we observe in the 
critical regime, represents a previously unobserved scaling behaviour.
\medskip

\begin{theorem}[Ageing in the presence of slowly vanishing drift]\label{ageing1}
If $0\le\beta<\frac{\alpha}{\alpha+1}$ and $\mu>0$, then there exist nonnegative
nondegenerate random variables~$\xi_t$ such that
$$\frac{\tau_{X^{(N)}_t}}{N^{1-\beta}} \Longrightarrow \xi_t\, .$$
Define a function~$c(t)\in(0,1)$ by $c(t) = E[ \exp\{ - \frac1{\xi_t}\}].$
Then we have
$$\lim_{N\uparrow\infty} 
\prob\big\{ X^{\ssup N}_t=X^{\ssup N}_{t+s} \quad\mbox{ for all} \quad 0\le s \le N^{-\beta} \big\}
= c(t)\, .$$
\end{theorem}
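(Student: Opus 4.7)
The plan is to split the proof into two pieces: (i) establishing the weak limit $\tau_{X^{\ssup N}_t}/N^{1-\beta}\Longrightarrow \xi_t$ for the depth of the currently occupied trap, and (ii) deducing the ageing probability from (i) via the memoryless property of the exponential holding times.

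Part (ii) is short. Conditionally on the environment and on $X^{\ssup N}_t=i$, the Markov property applied at the deterministic time $Nt$, together with the memorylessness of the exponential holding time at $i$, shows that the time $X^{\ssup N}$ spends at $i$ after $t$ is exponential with rate $N/\tau_i$. Hence
$$\prob\bigl\{X^{\ssup N}_t=X^{\ssup N}_{t+s}\text{ for all }0\le s\le N^{-\beta}\bigr\}
=\me\bigl[\exp\bigl(-N^{1-\beta}/\tau_{X^{\ssup N}_t}\bigr)\bigr].$$
Since $x\mapsto \exp(-1/x)$ is bounded and continuous on $[0,\infty]$ (with the convention $\exp(-1/0)\defeq 0$), part (i) together with bounded convergence yields $c(t)=\me[\exp(-1/\xi_t)]$, and the nondegeneracy of $\xi_t$ in $(0,\infty)$ guarantees $c(t)\in(0,1)$.

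For (i) I would follow the strategy underlying Theorem~\ref{scaling}(a). In that regime the walker is ballistic on the macroscopic scale, and the position at time $Nt$ is essentially the right-continuous inverse of the cumulative sojourn-time process: writing $T_i$ for the total time spent by $X$ at site $i$, the rescaled sums $\tfrac1N\sum_{i\le uN^{\alpha(1-\beta)}}T_i$ converge weakly in $u$ to a stable subordinator. Theorem~\ref{scaling}(a) is then obtained by inversion, and isolating the \emph{single} summand straddling level $t$ in the same limit gives the joint convergence of position and current-trap depth. The limit $\xi_t$ is a deterministic multiple of the straddling jump of the limiting stable subordinator, which is strictly positive and a.s.\ finite, hence nondegenerate.

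The main difficulty is upgrading the marginal convergence of Theorem~\ref{scaling}(a) to this joint convergence, since the height of the straddling jump is not a continuous functional of the Skorokhod path. I would therefore work directly with the triangular array of sojourn times $(T_i)$, combining the invariance principle for heavy-tailed sums with the classical description of the straddling jump (overshoot) distribution of a stable subordinator to extract the joint weak limit of the inverse partial-sum process and the straddling summand. A secondary technical point is to relate $T_i$ to $\tau_i$ quantitatively, accounting for the $O(N^\beta)$ independent exponential contributions each site typically receives; this should follow from standard hitting and return estimates for a random walk with drift of order $N^{-\beta}$.
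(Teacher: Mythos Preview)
Your Part~(ii) is correct and is exactly what the paper does at the end of Section~\ref{ch.age1}.

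For Part~(i) there is a genuine gap. You assert that $\xi_t$ is a deterministic multiple of the straddling jump of the limiting subordinator, but this is false. The total sojourn time at site~$i$ is $T_i=\tau_i\sum_{k=1}^{\ell_i}\eta_k(i)$, where $\ell_i$ is the number of visits to~$i$. Since $\ell_i$ is (essentially) geometric with success probability $\mu N^{-\beta}$, the rescaled sum $N^{-\beta}\sum_{k=1}^{\ell_i}\eta_k(i)$ converges in law to an exponential variable with mean~$1/\mu$, \emph{not} to a constant. Thus $T_i/N\approx(\tau_i/N^{1-\beta})\cdot E_i$ with $E_i$ an independent exponential factor, and the straddling jump of the subordinator encodes the product $\tau_i E_i$ rather than the depth $\tau_i$ alone. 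Recovering the depth from the jump means disentangling this random factor; the size bias that selects the straddling index acts on $\tau_i E_i$, not on $\tau_i$. This is not the ``secondary technical point'' you flag but the crux of the argument, and the explicit description of the law of~$\xi_t$ in Remark~\ref{ypsilon} confirms that it is not a rescaled straddling-jump law.

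The paper avoids this by working threshold by threshold: for each $v>0$ it singles out the deep traps $\{z:\tau_z>vN^{1-\beta}\}$ and decomposes the trajectory into travel pieces $U_j^{\ssup N}$ (time to reach the next deep trap after last leaving the previous one), sojourn pieces $S_j^{\ssup N}$ (time at the deep trap), and a negligible remainder $R_j^{\ssup N}$ from returning excursions (Lemmas~\ref{rconverges}--\ref{tconverges}). The limit of $\prob\{\tau_{X_t^{\ssup N}}>vN^{1-\beta}\}$ is then read off from the resulting alternating renewal structure. In this decomposition the trap depth and the local-time factor are kept separate from the start (the limit $S_j$ is explicitly the product of an independent Pareto and exponential, Lemma~\ref{sconverges}), which is precisely the information your subordinator-jump shortcut collapses.
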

\bigskip

\begin{remark}\label{ypsilon}
Note that the ageing exponent defined in the introduction equals
$\gamma=1-\beta$ in this case. The 
limit variable $\xi_t$ describes the traps from the point of view of the particle.
We define two independent series of nonnegative i.i.d.\ random variables
$U_1, U_2, \ldots$ and $S_1, S_2, \ldots$ such that 
\begin{itemize}
\item $S_i$ is the product of two independent random variables,
a Pareto variable with index~$\alpha$  and an exponential variable with 
mean~$1/\mu$; 
\item $U_i$ is a random variable with (for some constant $c$ depending only on~$\alpha$)
$$P\{U_i>x\} \sim c \,\frac{\mu^\alpha}{x^\alpha} 
\qquad \mbox{ as } x\uparrow\infty,$$
\end{itemize}
and the law of $\xi_t$ can be described as
$$P\big\{ \xi_t > v \big\}
=\sum_{j=1}^\infty P\Big\{ \sum_{i=1}^{j-1} (U_i+S_i) + U_j < \frac{t}{v} \le \sum_{i=1}^j (U_i+S_i) \Big\}, 
\quad\mbox{ for $v>0$.}$$
Here, loosely speaking, the variables $S_i$ represent periods in which the walker is in deep traps, while
the $U_i$ represent the travel times between these traps.
\end{remark}\bigskip
\pagebreak[3]

We also have results in the rapidly vanishing and critical drift regimes.
\bigskip

\begin{theorem}[Ageing in the presence of rapidly vanishing drift]\label{ageing2}
If $\beta\ge\frac{\alpha}{\alpha+1}$, then there exist nonnegative
nondegenerate random variables~$\zeta_t$ such that
$$\frac{\tau_{X^{(N)}_t}}{N^{\frac1{\alpha+1}}} \Longrightarrow \zeta_t\, .$$
Define a function~$k(t)\in(0,1)$ by $k(t) = E[ \exp\{ - \frac1{\zeta_t}\}].$
Then we have
$$\lim_{N\uparrow\infty} 
\prob\big\{ X^{\ssup N}_t=X^{\ssup N}_{t+s} \quad\mbox{ for all } 0\le s \le N^{-\frac{\alpha}{\alpha+1}} \big\}
= k(t)\, .$$
\end{theorem}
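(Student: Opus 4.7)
The plan mirrors the proof of Theorem~\ref{ageing1}. The crucial observation is that, by the strong Markov property applied to $X$ at the deterministic time $Nt$ together with the memoryless property of the exponential holding time, conditionally on $X^{\ssup N}_t=v$ the probability of no jump during a time window of length $N^{1/(\alpha+1)}$ (equivalently, length $N^{-\alpha/(\alpha+1)}$ after the time rescaling by $N$) equals $\exp(-N^{1/(\alpha+1)}/\tau_v)$. Taking expectation gives
$$\prob\bigl\{X^{\ssup N}_t=X^{\ssup N}_{t+s}\text{ for all }0\le s\le N^{-\alpha/(\alpha+1)}\bigr\}
=\me\Bigl[\exp\Bigl(-\frac{N^{1/(\alpha+1)}}{\tau_{X^{\ssup N}_t}}\Bigr)\Bigr],$$
so the whole theorem reduces to establishing the announced convergence $\tau_{X^{\ssup N}_t}/N^{1/(\alpha+1)}\Longrightarrow\zeta_t$ with $\zeta_t>0$ almost surely, followed by a bounded convergence step.

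The main work is to identify this ``environment seen from the particle''. The normalisation is consistent with the spatial scale $N^{\alpha/(\alpha+1)}$ of Theorem~\ref{scaling}(b)--(c), because within a region of that size the maximal trap depth is of order $(N^{\alpha/(\alpha+1)})^{1/\alpha}=N^{1/(\alpha+1)}$ and the continuum walker sits on atoms of the stable random measure $\rho$. Building on Theorem~\ref{scaling}, the plan is to lift position convergence to a joint convergence
$$\Bigl(\frac{X^{\ssup N}_t}{N^{\alpha/(\alpha+1)}},\frac{\tau_{X^{\ssup N}_t}}{N^{1/(\alpha+1)}}\Bigr)
\Longrightarrow\bigl(\Fin^{\mu'}_t,\zeta_t\bigr),$$
with $\mu'=\mu$ in the critical regime $\beta=\alpha/(\alpha+1)$, $\mu>0$, and $\mu'=0$ otherwise; then $\zeta_t$ is, up to a deterministic constant, the mass at $\Fin^{\mu'}_t$ of the atom of $\rho$ occupied by the limiting process at time $t$.

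For the implementation I would work through the clock-process representation underlying Theorem~\ref{scaling}: writing $X$ as a time change of the embedded biased walk $(Y_i)$ by the clock $\sum_i\tau_{Y_i}e_i$ with $(e_i)$ independent mean-one exponentials, the rescaled clock converges to an integral against~$\rho$. Coupling this with the convergence of the embedded walk lets one read off not only the position at time $Nt$ but also the rescaled trap depth $\tau_{X^{\ssup N}_t}/N^{1/(\alpha+1)}$. Positivity of $\zeta_t$ is then immediate, since in the limit the clock advances only while the underlying Brownian motion visits atoms of $\rho$, so at every fixed positive time the process is located at such an atom.

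The principal obstacle I foresee is precisely this upgrade from convergence of the walker alone to joint convergence with the occupied trap depth, because it requires controlling individual atoms of the limiting random measure rather than only aggregated functionals of it. Once the joint convergence is in place, the bounded continuous map $v\mapsto\exp(-1/v)$ on $(0,\infty)$ together with dominated convergence yields
$$\lim_{N\uparrow\infty}\me\Bigl[\exp\Bigl(-\frac{N^{1/(\alpha+1)}}{\tau_{X^{\ssup N}_t}}\Bigr)\Bigr]
=\me\bigl[\exp(-1/\zeta_t)\bigr]=k(t),$$
completing the proof.
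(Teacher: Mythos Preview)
Your outline is correct and follows essentially the same route as the paper: reduce the ageing probability to $\me[\exp(-N^{1/(\alpha+1)}/\tau_{X^{\ssup N}_t})]$, establish $\tau_{X^{\ssup N}_t}/N^{1/(\alpha+1)}\Rightarrow\zeta_t=\rho(\{\Fin^{\theta}_t\})$ via the Stone/speed-measure representation already used for Theorem~\ref{scaling}(b)--(c), and conclude by bounded convergence. The obstacle you flag---passing from convergence of the walker to convergence of the depth of its current trap---is exactly the crux, and the paper resolves it not through a generic clock-process argument but by invoking the \emph{point process sense} convergence of the rescaled trap measures $\nu^{\ssup N}\to\rho$ (coupled to be almost sure, as in \cite{FIN}) together with the localisation statement of Theorem~2.1 in \cite{FIN}; this yields directly that $\nu^{\ssup N}(\{Y[\nu^{\ssup N},S^{\ssup N}](t)\})\Rightarrow\rho(\{Y[\rho,{\rm id}](t)\})$, which is precisely the joint convergence you need.
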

\bigskip

\begin{remark}
In this regime the ageing exponents equals $\gamma=\frac1{\alpha+1}$.
We observe a \emph{joint} convergence of the rescaled process
and the rescaled trap environment interpreted as a random measure,
$$\Big( N^{-\frac\alpha{\alpha+1}}X^{\ssup N}, N^{-\frac{1}{\alpha+1}}\,\sum_{z\in\Z} \tau_z\, \delta_{N^{-\frac\alpha{\alpha+1}}z}\Big) \Longrightarrow (\Fin^\theta,\rho) \, ,$$
where $\theta=\mu$ if $\beta=\frac{\alpha}{\alpha+1}$, and $\theta=0$ otherwise.
The limiting random variable~$\zeta_t$ is then given as $$\zeta_t=\rho(\Fin^\theta_t).$$
\end{remark}\bigskip

\begin{remark}
In both of our ageing results, unless $\beta=0$, the time typically spent by the process in the 
current state is a sublinear function of time. This kind of phenomenon is sometimes called \emph{sub-ageing}
and is exhibited by an ageing exponent~$\gamma<1$.
\end{remark}

\section{Proof of Theorem~\ref{scaling}\,(a)}\label{ch.scale.a}

The basic idea of the proof to show that the process $X^{\ssup N}$ is mostly increasing and 
therefore essentially invertible. The convergence of one-dimensional marginals will then be 
proved for the inverse process using Laplace transforms. This will be extended to finite-dimensional
marginals using asymptotic independence properties, and finally to Skorokhod space by verification
of a continuity criterion.
\medskip

The first lemma ensures that $X^{\ssup N}$ is mostly increasing. We denote by
$T_x$ the first time where the process $X$ hits level $x>0$. We can write
$$T_x=\sum_{n=0}^\infty \eta_n \, \tau_{\s_n} \1\big\{\max_{j\le n} \s_j < x\big\},$$
where $(\s_n \colon n\ge 0)$ is the random walk embedded in the process~$X$
and $(\eta_n \colon n\ge0)$ an independent  family of independent standard 
exponential random variables. 

\begin{lemma}\label{schritt1}
There exists a constant $C$ such that, for $x$ large enough,
$$
\mathbb{P}\Big\{\sup_{v\le t}\big(\sup_{u\le v}X^{\ssup N}_u-X^{\ssup N}_v\big)\geq x\Big\}
\le C\, t^\alpha \, N^\alpha \, \exp\big\{-\mu\frac{x}{N^\beta}\big\},
$$
for all $t>0$.
\end{lemma}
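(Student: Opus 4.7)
The plan is to reduce the drawdown event to a gambler's ruin question for the embedded walk $(\s_n)$ and then to control the resulting prefactor via heavy-tail renewal theory applied to hitting times of $X^{\ssup N}$.

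Writing $T_k=\inf\{n\ge 0\colon \s_n=k\}$ for the hitting time of level~$k$ by the embedded walk, the unit-size jumps of $\s$ force the running maximum to pass through every integer between $0$ and $\sup_{u\le t}X^{\ssup N}_u$. Hence the event $\{\sup_{v\le t}(\sup_{u\le v}X^{\ssup N}_u-X^{\ssup N}_v)\ge x\}$ is contained in $\bigcup_{k\ge 0}A_k$, where
$$A_k=\bigl\{T_k\le Nt \text{ and } \s_{T_k+n}=k-x \text{ for some } n\ge 0\bigr\}.$$
Applying the strong Markov property of $(\s_n)$ at~$T_k$ together with the gambler's ruin formula for a biased nearest-neighbour walk with jump probabilities $p^{\ssup N},q^{\ssup N}$ yields
$$\mathbb{P}\bigl\{\s_{T_k+n}=k-x \text{ for some } n\ge 0 \bigm| T_k<\infty\bigr\}=(q^{\ssup N}/p^{\ssup N})^{x},$$
which depends only on the embedded walk and is therefore independent of the trap environment; thus $\mathbb{P}(A_k)\le(q^{\ssup N}/p^{\ssup N})^{x}\,\mathbb{P}\{T_k\le Nt\}$. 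A short expansion of $\log((1-y)/(1+y))$ at $y=\mu/N^\beta\in[0,1]$ then gives the bound $(q^{\ssup N}/p^{\ssup N})^x\le e^{-\mu x/N^\beta}$.

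What remains is the bound $\sum_{k\ge 0}\mathbb{P}\{T_k\le Nt\}\le C(Nt)^\alpha$, and this is the substantive step. Since the walker must visit each site $0,1,\ldots,k-1$ at least once before hitting~$k$, and each first visit contributes an independent exponential holding time of mean~$\tau_j$, the hitting time satisfies the stochastic lower bound $T_k\ge\sum_{j=0}^{k-1}\eta_j\tau_j$, where $(\eta_j)$ is an independent family of standard exponentials independent of the environment. The iid summands $\eta_j\tau_j$ have tail $\mathbb{P}\{\eta_j\tau_j>x\}\sim\Gamma(1+\alpha)x^{-\alpha}$, hence lie in the domain of attraction of the $\alpha$-stable subordinator, and a standard heavy-tail renewal estimate then gives
$$\sum_{k\ge 0}\mathbb{P}\{T_k\le Nt\}\le 1+\me\Big[\max\Big\{k\ge 0\colon\sum_{j=0}^{k-1}\eta_j\tau_j\le Nt\Big\}\Big]\le C(Nt)^\alpha.$$
I expect this renewal estimate to be the main technical obstacle, since it has to be uniform in $N$ and~$t$; it can be extracted from the literature on regularly varying iid sums or derived directly by optimising a Laplace-transform/Markov inequality of the form $\me[e^{-\lambda\eta\tau}]\le 1-c\lambda^\alpha+o(\lambda^\alpha)$ over $\lambda>0$. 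The residual small-$Nt$ regime, in which the $k=0$ contribution fails to be absorbed by $(Nt)^\alpha$, is handled separately using that making at least $x$ jumps in time~$Nt$ is overwhelmingly unlikely once $x$ is large.
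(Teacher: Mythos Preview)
Your proposal is correct and matches the paper's proof almost step for step: the same union bound over levels~$k$, the same gambler's-ruin estimate $(q^{\ssup N}/p^{\ssup N})^x\le e^{-\mu x/N^\beta}$, the same stochastic minorisation $T_k\ge\sum_{j=0}^{k-1}\eta_j\tau_j$, and the same heavy-tail renewal bound on the resulting sum (the paper cites Erickson's renewal theorem for regularly varying increments). One slip is purely notational---you define $T_k$ as the discrete step count for the embedded walk but then treat it as the continuous-time hitting time of~$X$---and your care with the small-$Nt$ regime is a subtlety the paper in fact glosses over.
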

\smallskip
\begin{proof}
It is clear that
$$\begin{aligned}
\Big\{\sup_{v\le t}\big( & \sup_{u\le v}X^{\ssup N}_u-X^{\ssup N}_v\big)\geq x\Big\}\\
& \subseteq \bigcup_{j=0}^\infty\Big\{T_j<Nt;\ X_v\leq j-x\mbox{ for some }v>T_j\Big\}.
\end{aligned}$$
Furthermore,
$$
\Big\{T_j<Nt;\ X_v\leq j-x\mbox{ for some }v>T_j\Big\}\subseteq
\Big\{T_j<Nt;\ \min_{k\geq1}\s^{\ssup j}_k\leq-x\Big\},
$$
where $(\s_k^{\ssup j}\colon k=1,2,\ldots)$ is the random walk embedded in $(X_v-j \colon v\geq T_j)$,
which is independent of $(X_v \colon v\leq T_j)$. Consequently,
$$\mathbb{P}\Big\{\sup_{v\le t}\big(\sup_{u\le v}X^{\ssup N}_u-X^{\ssup N}_v\big)\geq x\Big\}
\le\mathbb{P}\Big\{\min_{k\geq1}\s_k\leq-x\Big\}\sum_{j=0}^\infty \mathbb{P}\Big\{T_j<Nt\Big\}.$$
For the first term,
\begin{equation}\label{s1.2}
\prob\Big\{\min_{k\geq1}\s_k\leq-x\Big\}=\Bigl(\frac{q^{\ssup N}}{p^{\ssup N}}\Bigr)^x
=\Bigl(\frac{1-\mu N^{-\beta}}{1+\mu N^{-\beta}}\Bigr)^x\leq
\exp\big\{ - \mu \frac{x}{N^\beta} \big\}.
\end{equation}
We next note that
$$
\sum_{j=0}^\infty \mathbb{P}\big\{T_j<Nt\big\}\leq
\sum_{j=0}^\infty \mathbb{P}\Big\{\sum_{k=0}^{j-1}\tau_k\eta_k< Nt\Big\}.
$$
Noting that the tail of $\tau_0\eta_0$ is regularly varying with index $\alpha$ and using the renewal theorem
for this class of random variables, see e.g.~\cite{E70},  we see that the sum on the right is bounded by 
$C(Nt)^\alpha$. This completes the proof of the lemma.
\end{proof}

A direct consequence of Lemma~\ref{schritt1} is the following limit in probability.

\begin{lemma}\label{schritt1a}
If $0\le\beta<\frac\alpha{\alpha+1}$ then
$$ \mathbb{P}\left\{\left\vert N^{-\alpha(1-\beta)}\, \Big( \sup_{v\le t} X^{\ssup N}_v - X^{\ssup N}_t \Big)\right\vert \geq \varepsilon\right\} \to 0\, .$$
\end{lemma}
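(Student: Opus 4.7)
The plan is to observe that Lemma~\ref{schritt1a} is essentially a direct rewriting of Lemma~\ref{schritt1}, so the proof will consist of two short steps: a pointwise comparison followed by a choice of scale.

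First, I would point out that the absolute value in the statement is cosmetic, because the quantity $\sup_{v\le t} X^{\ssup N}_v - X^{\ssup N}_t$ is automatically nonnegative. Moreover, evaluating the outer supremum of Lemma~\ref{schritt1} at the single value $v=t$ yields the pointwise bound
$$\sup_{v\le t} X^{\ssup N}_v - X^{\ssup N}_t \;\le\; \sup_{v\le t}\Big(\sup_{u\le v} X^{\ssup N}_u - X^{\ssup N}_v\Big).$$
This reduces the claim to controlling the right-hand side at the scale $\varepsilon N^{\alpha(1-\beta)}$.

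Next, I would apply Lemma~\ref{schritt1} with $x=\varepsilon N^{\alpha(1-\beta)}$, which (for $N$ large enough that $x$ satisfies the hypothesis of that lemma) gives
$$\mathbb{P}\Big\{ \sup_{v\le t} X^{\ssup N}_v - X^{\ssup N}_t \ge \varepsilon\, N^{\alpha(1-\beta)} \Big\} \;\le\; C\, t^\alpha\, N^\alpha\, \exp\big\{-\mu\varepsilon\, N^{\alpha(1-\beta)-\beta}\big\}.$$
The arithmetic identity $\alpha(1-\beta)-\beta = \alpha - \beta(\alpha+1)$ shows that the exponent of $N$ inside the exponential is strictly positive precisely when $\beta<\frac{\alpha}{\alpha+1}$, which is our standing assumption. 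Hence the exponential factor decays faster than any polynomial in $N$, absorbs the prefactor $N^\alpha$, and the whole expression tends to $0$ as $N\uparrow\infty$.

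There is no real obstacle here; the only point that needs checking is the sign of the exponent $\alpha(1-\beta)-\beta$, which is exactly why the hypothesis $\beta<\frac{\alpha}{\alpha+1}$ appears. This lemma will later be used to justify that, up to an asymptotically negligible correction, $X^{\ssup N}_t$ may be replaced by its running maximum, enabling the inversion argument that produces the inverse stable subordinator as the scaling limit in regime~(a).
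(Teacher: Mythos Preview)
Your proof is correct and matches the paper's approach: the paper simply states that Lemma~\ref{schritt1a} is ``a direct consequence of Lemma~\ref{schritt1}'', and your argument spells out exactly that deduction --- applying Lemma~\ref{schritt1} with $x=\varepsilon N^{\alpha(1-\beta)}$ and checking that $\alpha(1-\beta)-\beta>0$ under the hypothesis $\beta<\frac{\alpha}{\alpha+1}$.
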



The lemma implies that
\begin{equation}\label{inversion}
\prob\Big\{ \frac{X^{\ssup N}_t}{N^{\alpha(1-\beta)}} \ge a\Big\} \sim
\prob\Big\{ T_{aN^{\alpha(1-\beta)}} \le Nt \Big\}.
\end{equation}
For integers $z<x$, we denote 
\begin{equation}\label{elldef}
\ell^{\ssup x}(z)= \sum_{n=0}^\infty  \1\big\{\s_n=z,\, \max_{j\le n} \s_j < x\big\}\, 
\hbox{ and }\,  \ell^{\ssup \infty}(z)= \sum_{n=0}^\infty  \1\big\{\s_n=z \big\}
\end{equation}
Rearranging the family  $(\eta_n \colon n\ge0)$, according to the position of the
random walks, as $(\eta_n(z) \colon n=1,\ldots,\ell^{\ssup x}(z); z<x)$ we obtain
$$T_x=\sum_{z<x} \tau_z\, \sum_{n=1}^{\ell^{\ssup x}(z)} \eta_n(z) \, .$$
In the following lemmas the expectations $\mathbb E$ are with respect to the full probability
space,
while the expectation $E$ refers to the traps $(\tau_z \colon z<x)$, and the expectation
${\sf E}$ to the exponentials $(\eta_n\colon n \ge 0)$.

\begin{lemma}\label{schritt2b}
For any $\delta \in (0, \alpha)$, any $y=y(N)$ and 
$$x=x(N)\leq\min\{N^{\alpha-\delta},y(N)\}$$ we have,
for all $\lambda>0$,
$$\begin{aligned}
\me\Big[ & \exp\Big\{ - \sum_{z<x} \tau_z\, \frac{\lambda}N\sum_{n=1}^{\ell^{\ssup y}(z)} \eta_n(z)\Big\}\Big]\\
& = \me \exp \Big\{ - \Gamma(1-\alpha) \frac{\lambda^\alpha}{N^{\alpha}} (1+o(1)) \sum_{z<x} 
\big(\ell^{\ssup y}(z)\big)^\alpha \Big\}+o(1) \mbox{ as $N\uparrow\infty$.}
\end{aligned}$$
\end{lemma}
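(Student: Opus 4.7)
The plan is to condition on the embedded random walk $(\s_n\colon n\ge 0)$ and on the family of exponential holding times $(\eta_n(z))$, and then to use the independence of the traps $(\tau_z)$ to reduce the left-hand side to an expectation of a product of Laplace transforms. Writing $A_z=\sum_{n=1}^{\ell^{\ssup y}(z)}\eta_n(z)$ and $\varphi(\theta)=E[e^{-\theta\tau_0}]$, one obtains
$$\me\Big[\exp\Big\{-\tfrac{\lambda}{N}\sum_{z<x}\tau_z A_z\Big\}\,\Big|\,\s,\eta\Big]=\prod_{z<x}\varphi(\lambda A_z/N).$$
The tail assumption~\eqref{traptail} combined with Karamata's Tauberian theorem gives $1-\varphi(\theta)=\Gamma(1-\alpha)\theta^\alpha(1+o(1))$ as $\theta\downarrow 0$, which is the analytic input underlying the whole computation.

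The next step is to upgrade this pointwise Tauberian asymptotic to a statement about the product. I would work on a good event $G_N$ on which $\max_{z<x}A_z\le N^{1-\delta'}$ for some small $\delta'>0$, so that $\lambda A_z/N\to 0$ uniformly in $z$ and the Tauberian expansion applies with an error that is uniform in $z$. On $G_N$, using $\log(1-u)=-u+O(u^2)$, the logarithm of the conditional Laplace transform becomes
$$\sum_{z<x}\log\varphi(\lambda A_z/N)=-\Gamma(1-\alpha)\frac{\lambda^\alpha}{N^\alpha}(1+o(1))\sum_{z<x}A_z^\alpha,$$
where the quadratic remainder is negligible compared to the main term because each factor $(\lambda A_z/N)^\alpha\le\lambda^\alpha N^{-\alpha\delta'}$ is small on $G_N$. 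Conditional on $(\s_n)$, $A_z$ is a sum of $\ell^{\ssup y}(z)$ i.i.d.\ standard exponentials, so a routine concentration argument (after discarding sites with very small local time, whose contribution is negligible because $\alpha<1$) then yields
$$\sum_{z<x}A_z^\alpha=(1+o_{\mathbb{P}}(1))\sum_{z<x}\bigl(\ell^{\ssup y}(z)\bigr)^\alpha.$$

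The main obstacle is to verify that $\prob(G_N)\to 1$ and that the contribution off $G_N$ can be absorbed into the additive $o(1)$ in the statement. Off $G_N$ both sides of the claimed equality are bounded by $1$, so discarding this event costs only $\prob(G_N^c)$, and it is enough to bound the maximum of the $A_z$ over $z<x$. This follows from exponential tail estimates for sums of exponentials combined with an a priori bound on $\max_{z<x}\ell^{\ssup y}(z)$: each local time of the embedded walk is a geometric random variable whose parameter stays controlled in the regimes of interest, and the range constraint $x\le\min\{N^{\alpha-\delta},y\}$ keeps the number of relevant sites polynomially bounded (and in particular the hypothesis $x\le N^{\alpha-\delta}$ with $\delta<\alpha$ provides the precise buffer that allows the quadratic error to be absorbed). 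After taking the outer expectation over $(\s,\eta)$, all deterministic error terms produced along the way are absorbed in the prefactor $(1+o(1))$ on the right, yielding the claimed identity.
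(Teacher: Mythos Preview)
Your approach differs from the paper's in the \emph{order of integration}, and this difference creates a real gap at your concentration step.

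The paper integrates the exponentials $\eta_n(z)$ first, obtaining
\[
{\sf E}\bigl[e^{-\lambda T_x/N}\,\big|\,\s,\tau\bigr]=\prod_{z<x}\bigl(1+\tfrac{\lambda\tau_z}{N}\bigr)^{-\ell^{\ssup y}(z)},
\]
then restricts to the event $\{\max_z\tau_z\le N^{1-\varepsilon}\}$, linearises $\log(1+u)\approx u$, and only \emph{then} integrates out the traps. The point is that after the $\eta$-integration the exponentials have disappeared and the quantity multiplying $\tau_z$ is already $\ell^{\ssup y}(z)$, so the Tauberian expansion of $\varphi$ is applied directly at $\lambda\ell^{\ssup y}(z)/N$ and produces $(\ell^{\ssup y}(z))^\alpha$ without any further work.

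You instead integrate $\tau$ first and arrive at $\sum_{z<x}A_z^\alpha$, and then you must argue
\[
\sum_{z<x}A_z^\alpha=(1+o_{\mathbb P}(1))\sum_{z<x}\bigl(\ell^{\ssup y}(z)\bigr)^\alpha.
\]
This step is \emph{not} routine. Conditionally on $\ell^{\ssup y}(z)=k$, one has $E[A_z^\alpha]=\Gamma(k+\alpha)/\Gamma(k)$, which equals $k^\alpha(1+O(1/k))$ only when $k$ is large. In the regime $\beta=0$ the local times of the embedded walk are geometrically distributed with bounded mean, so a positive fraction of sites have $\ell^{\ssup y}(z)=1$ or $2$; for these, $E[A_z^\alpha\mid\ell^{\ssup y}(z)]$ differs from $(\ell^{\ssup y}(z))^\alpha$ by a non-vanishing multiplicative constant (e.g.\ $\Gamma(1+\alpha)\neq 1$ for $k=1$), and your claimed identity simply fails. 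Your parenthetical ``discarding sites with very small local time, whose contribution is negligible because $\alpha<1$'' does not help here: when $\beta=0$ \emph{all} sites have small local time, and there is nothing left after discarding them. Even for $\beta>0$, where typical local times are of order $N^\beta\to\infty$ and the argument can be made to work, the required uniform control over boundary sites and the passage from $o_{\mathbb P}(1)$ to the deterministic $o(1)$ in the statement need substantially more care than you indicate.

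The paper's order of integration is not merely a stylistic choice: by averaging out the exponentials \emph{before} the traps, it sidesteps the $A_z^\alpha$ versus $\ell_z^\alpha$ comparison entirely.
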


\begin{proof}
Taking first expectation with respect to all $\eta_i(z)$, we have
$$\begin{aligned}
{\sf E}\Big[ \exp\Big\{ - \sum_{z<x} \tau_z\, \frac{\lambda}N\sum_{n=1}^{\ell^{\ssup y}(z)} \eta_n(z)\Big\}\Big]
&=\Big[\prod_{z<x}\big(1+\frac{\lambda\tau_z}{N}\big)^{-\ell^{\ssup y}(z)}\Big]\\
&=\exp \Big\{ - \sum_{z<x}\ell^{\ssup y}(z)\log\big(1+\frac{\lambda\tau_z}{N}\big)\Big\}.
\end{aligned}$$
It follows from \eqref{s1.2} that $\prob\{\ell^{\ssup y}(-N^{\beta+\varepsilon})>0\}=o(1)$ for any $\varepsilon>0$.
Furthermore, we can choose $\varepsilon>0$ such that 
$$
P\Big\{\max_{z\in[-N^{\beta+\varepsilon},x]} \tau_z >N^{1-\varepsilon}\Big\}
\leq(x+N^{\beta+\varepsilon})\,P\big\{\tau_0>N^{1-\varepsilon}\big\}=o(1).
$$
Therefore,
$$\begin{aligned}
\me\Big[ & \exp\Big\{ - \sum_{z<x} \tau_z\, \frac{\lambda}N\sum_{n=1}^{\ell^{\ssup y}(z)} \eta_n(z)\Big\}\Big]\\
& =\me \Big[\exp \Big\{ - \sum_{z<x}\ell^{\ssup y}(z)\log\big(1+\frac{\lambda\tau_z}{N}\big)\Big\};\mathcal{A}\Big]
+o(1),
\end{aligned}$$
where 
$$\mathcal{A}=\Big\{\max_{z\in[-N^{\beta+\varepsilon},x]} \tau_z\leq N^{1-\varepsilon}\Big\}\cap\Big\{\ell^{\ssup y}(-N^{\beta+\varepsilon})=0\Big\}.$$
Next we note that $\log\big(1+\frac{\lambda\tau_z}{N}\big)=\frac{\lambda\tau_z}{N}(1+O(N^{-\varepsilon}))$ 
on the event $\{\tau_z\leq N^{1-\varepsilon}\}.$ Hence
$$\begin{aligned}
\me\Big[ & \exp\Big\{ - \sum_{z<x} \tau_z\, \frac{\lambda}N\sum_{n=1}^{\ell^{\ssup y}(z)} \eta_n(z)\Big\}\Big]\\
&=\me \Big[\exp \Big\{ - \frac{\lambda}{N}(1+O(N^{-\varepsilon}))\sum_{z<x}\tau_z\ell^{\ssup y}(z)\Big\};\mathcal{A}\Big]
+o(1).
\end{aligned}$$

By the Markov property, for $k\ge 1$,
\begin{equation}\label{LocTime}
\prob\{\ell^{\ssup \infty}(0)=k\}=(2p^{(N)}-1)(2-2p^{(N)})^{k-1}
=\frac{\mu}{N^\beta}\Bigl(1-\frac{\mu}{N^\beta}\Bigr)^{k-1}.
\end{equation}
Using this one can easily obtain the bound
$$
\prob\big\{\ell^{\ssup \infty}(z)> N^{1-\varepsilon}\big\}\leq\exp\big\{-\mu N^{1-\beta-\varepsilon}\big\}.
$$
This implies that
$$
\prob\Big\{\max_{z\in[-N^{\beta+\varepsilon},x]}\ell^{\ssup y}(z)> N^{1-\varepsilon}\Big\}
\leq  (x+N^{\beta+\varepsilon})\,\prob\big\{\ell^{\ssup \infty}(z)>  N^{1-\varepsilon}\big\}=o(1).
$$
Recall from~\eqref{traptail} in conjunction with XIII (5.22) of \cite{Fe71} that
$$Ee^{-\lambda \tau_z}=e^{- \Gamma(1-\alpha)\,\lambda^\alpha} \,(1+o(\lambda^\alpha))=
\exp\left\{-\Gamma(1-\alpha)\,\lambda^\alpha(1+o(1))\right\},\quad\lambda\downarrow0.$$ 
Hence, on the event $\{\ell^{\ssup y}(z)\leq N^{1-\varepsilon}\}$,
$$\begin{aligned}
E\exp \Big\{ - \frac{\lambda}{N}\tau_z\ell^{\ssup y}(z)\Big\}
=\exp \Big\{-\Gamma(1-\alpha)\frac{\lambda^\alpha\big(\ell^{\ssup y}(z)\big)^\alpha}{N^\alpha}(1+o(1))\Big\}.
\end{aligned}$$
Consequently,
$$\begin{aligned}
\me\Big[ & \exp\Big\{ - \sum_{z<x} \tau_z\, \frac{\lambda}N\sum_{n=1}^{\ell^{\ssup y}(z)} \eta_n(z)\Big\};\,\mathcal{B}\Big]\\
&=\exp \Big\{-\Gamma(1-\alpha) \frac{\lambda^\alpha}{N^{\alpha}} (1+o(1)) \sum_{z<x} 
\big(\ell^{\ssup y}(z)\big)^\alpha\Big\}+o(1),
\end{aligned}$$  
where $\mathcal{B}=\mathcal{A}\cap\{\max_{z\in[-N^{\beta+\varepsilon},x]}\ell^{\ssup y}(z)\leq N^{1-\varepsilon}\}$.
This completes the proof.
\end{proof}
\medskip

\begin{lemma}\label{schritt3}
Let $0\le\beta<\frac\alpha{\alpha+1}$ and suppose $\psi\colon[0,\infty)\to[0,\infty)$ is increasing and satisfies
$$\int_0^\infty \psi^4(y) e^{-\mu y} \, dy<\infty.$$
Then we have the following limit in distribution (averaged over the trap environment):
$$ \lim_{\heap{N\to\infty}{x/N^\beta \to \infty}}  
 \frac1x\,\sum_{z=-\infty}^{x-1} \psi\big(\sfrac{\ell^{\ssup x}(z))}{N^{\beta}}\big) 
 = 
\left\{
\begin{array}{ll}
\int_0^\infty \psi(y) \mu e^{-\mu y}\, dy, &\mbox{ if } \beta>0,\\[2mm] 
\sum_{k=1}^\infty \mu(1-\mu)^{k-1}\psi(k), &\mbox{ if } \beta=0.
\end{array}
\right.
$$
\end{lemma}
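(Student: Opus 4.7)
The statement is a law of large numbers for the empirical distribution of the rescaled local times $\ell^{\ssup x}(z)/N^\beta$ over $z<x$. The guiding intuition is that, for a typical $z$ in the bulk $\{0,1,\ldots,x-1\}$, the law of $\ell^{\ssup x}(z)$ is well approximated by that of $\ell^{\ssup\infty}(z)$ which, by~\eqref{LocTime}, is geometric with parameter $\mu/N^\beta$; after rescaling by $N^\beta$ this converges to an exponential variable with parameter $\mu$ when $\beta>0$, and is exactly geometric with parameter $\mu$ when $\beta=0$. Since the right-hand side of the lemma is precisely the corresponding expectation of $\psi$, my plan is to show that the mean of the normalized sum converges to this limit and that its variance tends to zero, then conclude via Chebyshev's inequality (the limit being deterministic, convergence in probability and in distribution coincide).

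For the convergence of the mean I split the summation range according to a large constant $A$ into three parts: far below, $z<-AN^\beta$; bulk, $-AN^\beta\le z\le x-AN^\beta$; and near the top, $x-AN^\beta<z<x$. The far-left range is controlled using~\eqref{s1.2}, which gives $\prob\{\ell^{\ssup x}(z)>0\}\le\exp\{-\mu|z|/N^\beta\}$, and together with a Cauchy-Schwarz bound against the (finite, by~\eqref{LocTime} and the $\psi^4$-hypothesis) second moment of $\psi(\ell^{\ssup\infty}(0)/N^\beta)$, this produces total contribution $O(N^\beta/x)=o(1)$. The near-top range contains only $AN^\beta$ sites and contributes at most $O(AN^\beta/x)\to 0$. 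In the bulk, the strong Markov property applied at $T_x$ gives $\prob\{\ell^{\ssup x}(z)\ne\ell^{\ssup\infty}(z)\}\le(q^{\ssup N}/p^{\ssup N})^{x-z}\le e^{-\mu A}$, so a further Cauchy-Schwarz step lets one replace $\ell^{\ssup x}(z)$ by $\ell^{\ssup\infty}(z)$ up to an error that vanishes as $A\to\infty$. Applying~\eqref{LocTime} together with the standard scaled-geometric-to-exponential limit, and using the $\psi^4$ hypothesis to secure uniform integrability, reduces the limit of the mean to $\int_0^\infty\psi(y)\,\mu e^{-\mu y}\,dy$ when $\beta>0$, or to $\sum_{k=1}^\infty\mu(1-\mu)^{k-1}\psi(k)$ when $\beta=0$.

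For the variance I would exploit an excursion decomposition at the upward ladder times of the embedded walk $\s$. By the strong Markov property, the excursions $E_y$ of $\s$ from $T_y$ to $T_{y+1}$ are i.i.d.\ for $y\ge 0$, and one has
$$\ell^{\ssup x}(z)=\sum_{y=\max(z,0)}^{x-1} V_{y,z},$$
where $V_{y,z}$ counts the visits to level $z$ during excursion $E_y$. A gambler's-ruin estimate yields $\prob\{V_{y,z}>0\}\le(q^{\ssup N}/p^{\ssup N})^{y-z}\le\exp\{-\mu(y-z)/N^\beta\}$, so a truncation $\tilde\ell^{\ssup x}(z)$ that uses only the excursions $E_y$ with $y-z\le BN^\beta$ approximates $\ell^{\ssup x}(z)$ with $L^2$-error bounded by $(\me\psi^4(\ell^{\ssup x}(z)/N^\beta))^{1/2}\,\prob\{\ell^{\ssup x}(z)\ne\tilde\ell^{\ssup x}(z)\}^{1/2}$, which is controlled by the fourth-moment hypothesis and small for large $B$. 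For $|z-z'|>BN^\beta$ the truncated local times depend on disjoint families of i.i.d.\ excursions, so their covariance vanishes, and the $O(xBN^\beta)$ remaining covariance pairs contribute at most $O(BN^\beta/x)$. Letting $B\uparrow\infty$ sufficiently slowly after $N\uparrow\infty$ then yields a vanishing variance.

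The main obstacle is the variance estimate: the local times $\ell^{\ssup x}(z)$ for different $z$ are read off the same trajectory and are therefore strongly dependent, and the near-independence of sites separated by much more than $N^\beta$ must be made quantitative. The excursion decomposition at the ladder times provides the correct framework; the fourth-moment hypothesis on $\psi$ enters precisely to control the $L^2$ error from the truncation step.
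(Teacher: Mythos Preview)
Your proof is correct and follows the same overall second-moment strategy as the paper: show that the mean of the normalised sum converges to the stated limit and that the variance tends to zero, then invoke Chebyshev. Your treatment of the mean is essentially identical (split off negative $z$ via~\eqref{s1.2}, replace $\ell^{\ssup x}(z)$ by $\ell^{\ssup\infty}(z)$ for bulk $z$, then apply~\eqref{LocTime}). Where you differ is in the variance bound. The paper does not decompose into ladder-time excursions or introduce a truncation; instead it bounds ${\mathbb C}ov\big(\psi(\sfrac{\ell^{\ssup x}(0)}{N^\beta}),\psi(\sfrac{\ell^{\ssup x}(z)}{N^\beta})\big)$ directly via the event $A_z=\{\s_k>0\text{ for all }k\ge\sigma_z\}$. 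On $A_z$ one has $\ell^{\ssup x}(0)=\ell^{\ssup z}(0)$, which is measurable with respect to the walk up to time~$\sigma_z$, so the Markov property yields the product bound $\me[\psi(\sfrac{\ell^{\ssup x}(0)}{N^\beta})\psi(\sfrac{\ell^{\ssup x}(z)}{N^\beta});A_z]\le\me\psi(\sfrac{\ell^{\ssup z}(0)}{N^\beta})\,\me\psi(\sfrac{\ell^{\ssup x}(0)}{N^\beta})$; the complement $A_z^c$ has probability at most $e^{-\mu z/N^\beta}$ and is controlled by Cauchy--Schwarz against the fourth moment. Summing the resulting exponentially decaying covariance bound over $z$ gives $\mathbb{V}ar\le CxN^\beta$ in one step, with no auxiliary truncation parameter to balance. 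Your excursion-truncation route achieves the same rate and offers a clean structural picture (exact independence at separation $>BN^\beta$), but here it is slightly less economical than the paper's direct covariance estimate.
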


\begin{proof}
We give the proof for the case $\beta>0$ only, as the case $\beta=0$ differs only in one minor point.
For every $z<0$ we have the inequalities
$$\me\psi\big(\sfrac{\ell^{\ssup x}(z)}{N^\beta}\big) \leq 
\me\psi\big(\sfrac{\ell^{\ssup \infty}(z)}{N^\beta}\big) \leq
\prob\{\min_{k\geq1}\s_k\leq z\}
\me\psi\big(\sfrac{\ell^{\ssup \infty}(0)}{N^\beta}\big).
$$
Applying (\ref{s1.2}), we obtain, for $z > 0$,
$$
\me\psi\big(\sfrac{\ell^{\ssup x}(z)}{N^\beta}\big)
\leq \exp\big\{ \mu \sfrac{z}{N^\beta} \big\}
\me\psi\big(\sfrac{\ell^{\ssup \infty}(0)}{N^\beta}\big).
$$
Therefore,
\begin{equation}
\label{s3.1}
\me\sum_{z=-\infty}^{-1}\psi\big(\sfrac{\ell^{\ssup x}(z)}{N^\beta}\big)
\leq CN^\beta\me\psi\big(\sfrac{\ell^{\ssup \infty}(0)}{N^\beta}\big). 
\end{equation}

>From \eqref{LocTime} we conclude, using here that $\beta>0$, that
\begin{equation}\label{limit}
\lim_{N\to\infty}\me\psi^r\Big(\frac{\ell^{\ssup \infty}(0)}{N^{\beta}}\Big)
=\int_0^\infty \psi^r(x) \mu e^{-\mu x}\, dx,
\end{equation}
for $0<r\le 4$.
Furthermore, by~\eqref{LocTime}, the random variable $\ell^{\ssup \infty}(0)$ has a geometric distribution and 
\begin{equation}\label{limit0}
\sup_{N\ge 1} \me\psi^r\big(\sfrac{\ell^{\ssup \infty}(0)}{N^{\beta}}\big) <\infty.
\end{equation}
Combining (\ref{s3.1}) and (\ref{limit0}), we conclude that (with a constant $C$ not depending on~$x$)
\begin{equation}\label{goto0}
\me\sum_{z=-\infty}^{-1}\psi\big(\sfrac{\ell^{\ssup x}(z)}{N^\beta}\big)
\leq CN^\beta.
\end{equation}
It follows from this bound and the condition $xN^{-\beta}\to\infty$ that
\begin{equation}\label{s3.2}
\frac{1}{x}\sum_{z=-\infty}^{-1}\psi\big(\sfrac{\ell^{\ssup x}(z)}{N^\beta}\big)
\Longrightarrow 0.
\end{equation}
For every $z>0$ we define 
\begin{equation}\label{sigmadef}
\sigma_z:=\min\{k\geq1:\,\s_k=z\}\text{ and }A_z:=\{\s_k>0\text{ for all }k>\sigma_z\}.
\end{equation}
Then
\begin{align*}
\me\big[\psi\big(\sfrac{\ell^{\ssup x}(0)}{N^\beta}\big)& \,\psi\big(\sfrac{\ell^{\ssup x}(z)}{N^\beta}\big)\big]\\& =
\me\big[\psi\big(\sfrac{\ell^{\ssup x}(0)}{N^\beta}\big)\,\psi\big(\sfrac{\ell^{\ssup x}(z)}{N^\beta}\big);A_z\big]+
\me\big[\psi\big(\sfrac{\ell^{\ssup x}(0)}{N^\beta}\big)\,\psi\big(\sfrac{\ell^{\ssup x}(z)}{N^\beta}\big);A_z^c\big]\\
&=:E_1+E_2.
\end{align*}
Using the Cauchy-Schwarz inequality, we obtain
\begin{align*}
E_2&\leq\prob^{1/2}(A_z^c)\me^{1/2}\big[\psi^2\big(\sfrac{\ell^{\ssup x}(0)}{N^\beta}\big)\psi^2\big(\sfrac{\ell^{\ssup x}(z)}{N^\beta}\big)\big]\\
&\leq\prob^{1/2}(A_z^c)\me^{1/4}\big[\psi^4\big(\sfrac{\ell^{\ssup x}(0)}{N^\beta}\big)\big]\,\me^{1/4}\big[\psi^4\big(\sfrac{\ell^{\ssup x}(z)}{N^\beta}\big)\big]\\
&\leq\prob^{1/2}(A_z^c)\me^{1/2}\big[\psi^4\big(\sfrac{\ell^{\ssup x}(0)}{N^\beta}\big)\big].
\end{align*}
Noting that $\prob(A_z^c)=\prob\{\min_{k\geq1}\s_k\leq-z\}$ and applying (\ref{s1.2}),
we get
\begin{equation}
\label{s3.3}
E_2\leq\exp\big\{ - \mu \sfrac{z}{2N^\beta} \big\}\me^{1/2}[\psi^4\big(\sfrac{\ell^{\ssup x}(0)}{N^\beta}\big)].
\end{equation}
Since $\ell^{\ssup x}(0)=\ell^{\ssup z}(0)$ on the event $A_z$, using the Markov property, 
\begin{align}
\label{s3.4}
\nonumber
E_1&=\me\big[\psi\big(\sfrac{\ell^{\ssup z}(0)}{N^\beta}\big) \, \psi\big(\sfrac{\ell^{\ssup x}(z)}{N^\beta}\big);A_z\big]\\
\nonumber
&=\me\big[\psi\big(\sfrac{\ell^{\ssup z}(0)}{N^\beta}\big)\big]\,\me\big[\psi\big(\sfrac{\ell^{\ssup {x-z}}(0)}{N^\beta}\big);\min_{k\geq1}\s_k>-z\big]\\
\nonumber
&\leq \me\big[\psi\big(\sfrac{\ell^{\ssup z}(0)}{N^\beta}\big)\big]\me\big[\psi\big(\sfrac{\ell^{\ssup{x-z}}(0)}{N^\beta}\big)\big]
\\
&\leq \me\big[\psi\big(\sfrac{\ell^{\ssup z}(0)}{N^\beta}\big)\big]\me\big[\psi\big(\sfrac{\ell^{\ssup{x}}(0)}{N^\beta}\big)\big].
\end{align}
Combining (\ref{s3.3}) and (\ref{s3.4}) gives
$$
{\mathbb{C}ov}\bigl(\psi\big(\sfrac{\ell^{\ssup x}(0)}{N^\beta}\big),\psi\big(\sfrac{\ell^{\ssup x}(z)}{N^\beta}\big)\bigr)\leq
\exp\big\{ - \mu \sfrac{z}{2N^\beta} \big\}\me^{1/2}\big[\psi^4\big(\sfrac{\ell^{\ssup x}(0)}{N^\beta}\big)\big].
$$
Therefore,
\begin{align*}
\mathbb{V}ar\Bigl[\sum_{z=0}^{x-1}\psi\big(\sfrac{\ell^{\ssup x}(z)}{N^\beta}\big)\Bigr]&=
\sum_{z=0}^{x-1}\mathbb{V}ar\big[\psi\big(\sfrac{\ell^{\ssup x}(z)}{N^\beta}\big)\big]+
2\sum_{z=0}^{x-1}\sum_{y=z+1}^{x-1}{\mathbb{C}ov}
\bigl(\psi\big(\sfrac{\ell^{\ssup x}(y)}{N^\beta}\big),\psi\big(\sfrac{\ell^{\ssup x}(z)}{N^\beta}\big)\bigr)\\
&\leq x\me\big[\psi^2\big(\sfrac{\ell^{\ssup \infty}(0)}{N^\beta}\big)\big]+
2\sum_{z=0}^{x-1}\sum_{y=z+1}^{x-1}\exp\big\{ - \mu \sfrac{y-z}{2N^\beta} \big\}\me^{1/2}\big[\psi^4\big(\sfrac{\ell^{\ssup x}(0)}{N^\beta}\big)\big]\\
&\leq x\me^{1/2}\big[\psi^4\big(\sfrac{\ell^{\ssup \infty}(0)}{N^\beta}\big)\big]+
x\me^{1/2}\big[\psi^4\big(\sfrac{\ell^{\ssup x}(0)}{N^\beta}\big)\big]\, \sum_{z=0}^\infty\exp\big\{ - \mu \sfrac{z}{2N^\beta} \big\}\\
&\leq CxN^\beta\me^{1/2}\big[\psi^4\big(\sfrac{\ell^{\ssup \infty}(0)}{N^\beta}\big)\big].
\end{align*}
>From this bound and Chebyshev's inequality we get
$$
\prob\Big\{\Big|\sum_{z=0}^{x-1}\psi\big(\sfrac{\ell^{\ssup x}(z)}{N^\beta}\big)-
\me\sum_{z=0}^{x-1}\psi\big(\sfrac{\ell^{\ssup x}(z)}{N^\beta}\big)\Big|
>\varepsilon x\Big\}
\leq\frac{CxN^\beta\me^{1/2}[\psi^4(\frac{\ell^{\ssup \infty}(0)}{N^\beta})]}{\varepsilon^2x^2}.
$$
Applying (\ref{limit}) and (\ref{limit0}), we have
\begin{equation}
\label{s3.5}
\prob\Big\{\Big|\sum_{z=0}^{x-1}\psi\big(\sfrac{\ell^{\ssup x}(z)}{N^\beta}\big)-
\me\sum_{z=0}^{x-1}\psi\big(\sfrac{\ell^{\ssup x}(z)}{N^\beta}\big)\Big|
>\varepsilon x\Big\}\leq\frac{CN^\beta}{\varepsilon^2x}.
\end{equation}

We now estimate the expectation $\me\sum_{z=0}^{x-1}\psi(\ell^{\ssup x}(z)/N^\beta)$.
On the one hand,
$$
\me\psi\big(\sfrac{\ell^{\ssup x}(z)}{N^\beta}\big)\leq\me\psi\big(\sfrac{\ell^{\ssup \infty}(z)}{N^\beta}\big)
=\me\psi\big(\sfrac{\ell^{\ssup \infty}(0)}{N^\beta}\big).
$$
On the other hand,
\begin{align*}
\me\psi\big(\sfrac{\ell^{\ssup x}(z)}{N^\beta}\big)=\me\psi\big(\sfrac{\ell^{\ssup {x-z}}(0)}{N^\beta}\big)
&\geq\me\big[\psi\big(\sfrac{\ell^{\ssup \infty}(0)}{N^\beta}\big);A_{x-z}\big]\\
&=\me\psi\big(\sfrac{\ell^{\ssup \infty}(0)}{N^\beta}\big)-\me\big[\psi\big(\sfrac{\ell^{\ssup \infty}(0)}{N^\beta}\big);A_{x-z}^c\big]\\
&\geq\me\psi\big(\sfrac{\ell^{\ssup \infty}(0)}{N^\beta}\big)-\prob^{1/2}(A_{x-z}^c)
\me^{1/2}\big[\psi^2\big(\sfrac{\ell^{\ssup \infty}(0)}{N^\beta}\big)\big].
\end{align*}
Consequently,
$$
\Big|\me\sum_{z=0}^{x-1}\psi\big(\sfrac{\ell^{\ssup x}(z)}{N^\beta}\big)-x\me\big[\psi\big(\sfrac{\ell^{\ssup \infty}(0)}{N^\beta}\big)\big]\Big|\leq
\me^{1/2}\big[\psi^2\big(\sfrac{\ell^{\ssup \infty}(0)}{N^\beta}\big)\big]\sum_{z=0}^\infty\prob^{1/2}\big(A_{z}^c\big).
$$
Applying (\ref{s1.2}) and (\ref{limit0}), we conclude that
$$\begin{aligned}\limsup_{\heap{N\to\infty}{x/N^\beta \to \infty}} &
\me^{1/2}\big[\psi^2\big(\sfrac{\ell^{\ssup \infty}(0)}{N^\beta}\big)\big]
\times \frac1x\,\sum_{z=0}^\infty\prob^{1/2}(A_{z}^c) \\
& \le \sup_{N\ge 1} \me^{1/2}\big[\psi^2\big(\sfrac{\ell^{\ssup \infty}(0)}{N^\beta}\big)\big] \,
\times \limsup_{x/N^\beta \to \infty} \frac1x\,\sum_{z=0}^\infty \exp\{-\mu \sfrac{z}{2N^\beta}\}=0. 
\end{aligned}
$$
Using also~(\ref{limit}), we infer that
\begin{equation}
\label{s3.6}
\begin{aligned}
\lim_{\heap{N\to\infty}{x/N^\beta \to \infty}} 
\frac{1}{x}\me\sum_{z=0}^{x-1}\psi\big(\sfrac{\ell^{\ssup x}(z)}{N^\beta}\big) = 
\lim_{N\to\infty} \me\big[\psi\big(\sfrac{\ell^{\ssup \infty}(0)}{N^\beta}\big)\big]
= \int_0^\infty \psi(y) \mu e^{-\mu y}\, dy .
\end{aligned}
\end{equation}
Combining (\ref{s3.2}), (\ref{s3.5}) and (\ref{s3.6}) finishes the proof of the lemma.
\end{proof}

We now start with the proof of Theorem~\ref{scaling}\,(a), first considering one-di\-mensional marginals. 
Using Lemma~\ref{schritt2b} for $x=y=aN^{\alpha(1-\beta)}$  we have
$$\begin{aligned}
\me\exp\big\{ -\sfrac{\lambda}N T_{aN^{\alpha(1-\beta)}}\big\}
= \me \exp \Big\{ - \Gamma(1-\alpha) \sfrac{\lambda^\alpha}{N^{\alpha(1-\beta)}} (1+o(1))\!\! \sum_{z<x} 
\big(\sfrac{\ell^{\ssup x}(z)}{N^\beta}\big)^\alpha \Big\}.
\end{aligned}$$
We therefore conclude from Lemma~\ref{schritt3}, taking $\psi(x) = x^\alpha$,
$$\begin{aligned}\lim_{N\uparrow\infty}
\me\exp\big\{ - \sfrac{\lambda}{N} \,  T_{aN^{\alpha(1-\beta)}}\big\}  
= \exp\Big\{- a \,\lambda^\alpha\, \Gamma(1-\alpha) \int_0^\infty u^\alpha \, \mu e^{-\mu u} \, du \Big\}.
\end{aligned}$$
Finally, note that
$$\int_0^\infty u^\alpha \, \mu e^{-\mu u} \, du =
\Gamma(1+\alpha)\mu^{-\alpha}\frac{}{}.$$
Hence,
\begin{equation}
\label{tcsub}
\lim_{N\uparrow\infty}
\me\exp\big\{ - \sfrac{\lambda}{N} \,  T_{a\mu^\alpha N^{\alpha(1-\beta)}/\Gamma(1+\alpha)}\big\} 
=  \exp\Big\{- a \,\lambda^\alpha\,\Gamma(1-\alpha)\Big\}
\end{equation}
as required, in the light of~\eqref{inversion}, to complete the convergence of one-dimensional marginals.%
\bigskip

Next, we show convergence of finite-dimensional distributions. 
It is easy to see that, for all $0\leq x<y$,
$$T_y-T_x=\sum_{z=x}^{y-1}\tau_z\sum_{j=1}^{\ell^{(y)}(z)}\eta_j(z)+\sum_{z<x}\tau_z\sum_{j=\ell^{(x)}(z)+1}^{\ell^{(y)}(z)}\eta_j(z).$$
Using Lemma~\ref{schritt2b} and inequality~\eqref{goto0} 
we see that
$$\begin{aligned}
\mathbb{E}\exp\Big\{-\frac{\lambda}{N}\sum_{z<x}\tau_z\sum_{j=\ell^{(x)}(z)+1}^{\ell^{(y)}(z)}\eta_j(z)\Big\}
&=\mathbb{E}\exp\Big\{-\frac{\lambda}{N}\sum_{z<0}\tau_z\sum_{j=1}^{\ell^{(y-x)}(z)}\eta_j(z)\Big\}\\
&=\mathbb{E}\exp\Big\{-\frac{\lambda^\alpha\Gamma(1-\alpha)(1+o(1))}{N^{\alpha(1-\beta)}}\sum_{z<0}\big(\frac{\ell^{(y-x)}(z)}{N^\beta}\big)^\alpha\Big\}\\
&\longrightarrow 1, \qquad\mbox{ as $N\uparrow \infty$,}
\end{aligned}$$
for every $\lambda>0$ (recall $\beta < \alpha/(\alpha+1)$).  Therefore,
$$
\lim_{N\to\infty} \frac{1}{N}\sum_{z<x}\tau_z\sum^{\ell^{(y)}(z)}_{j=\ell^{(x)}(z)+1}\eta_j(z)= 0
\quad\mbox{ in probability.}$$
Since $T_x$ and $\sum_{z=x}^{y-1}\tau_z\sum_{j=1}^{\ell^{(y)}(z)}\eta_j(z)$ are independent,
we therefore conclude that $T_x/N$ and $(T_y-T_x)/N$ are asymptotically independent. 
\medskip

Now fix $0\le t_1< \cdots <t_k$ and $0<a_1<\cdots<a_k$. By the argument above one can easily show that
$T_{a_1}/N,(T_{a_2}-T_{a_1})/N,\ldots,(T_{a_k}-T_{a_{k-1}})/N$ are asymptotically independent.
Noting also that, for any $0<a<b$,
$$\frac{1}{N}(T_{bN^{\alpha(1-\beta)}}-T_{aN^{\alpha(1-\beta)}})\to \Big(\Sub_b-\Sub_a,\Big)$$
we obtain the convergence of the finite-dimensional distributions,
$$
\Big(\frac{1}{N}T_{a_1N^{\alpha(1-\beta)}},\ldots,\frac{1}{N}T_{a_kN^{\alpha(1-\beta)}}\Big)
\longrightarrow  \big(\Sub_{a_1},\ldots,\Sub_{a_k}\big).
$$

Finally, to prove the tightness of $X^{\ssup N}$ in the Skorokhod space~$D[0,1]$, we first  note that the convergence 
of one-dimensional distributions and Lemma~\ref{schritt1}
imply that, for any $\delta>0$ and $\varepsilon>0$, for some constant $c= c(\alpha, \mu)$,
$$
\lim_{N\to\infty}\prob\Big\{\sup_{t\leq\delta}\big|X^{\ssup N}_t-X^{\ssup N}_0\big|\geq\varepsilon\Big\}=\prob\Big\{\Sub_\varepsilon\leq c \delta\Big\}.
$$
Then, since the increments of $X^{\ssup N}$ are homogeneous in time,
$$
\limsup_{N\to\infty}\prob\Big\{\max_{0\le i\le \delta^{-1}}\sup_{t\leq\delta}|X^{\ssup N}_{i\delta+t}-X^{\ssup N}_{i\delta}|\geq\varepsilon\Big\}\leq
(1+\delta^{-1})\,\prob\big\{\Sub_\varepsilon\leq c \delta\big\}.
$$
Let 
$$\omega'(f,\delta) = \inf_{\heap{0=t_0<t_1<\cdots<t_v=1}{t_i-t_{i-1}>\delta}}  \sup_{\heap{1\le i\le v}{t_{i-1}\le s,t<t_i}} \big|f(s)-f(t)\big| $$ 
denote the standard continuity modulus in the Skorokhod space~$D[0,1]$. It is easy to see that
$$\omega'(f,\delta)\leq2\max_{0\le i\le \delta^{-1}}\sup_{t\leq\delta}|f(i\delta+t)-f(i\delta)|\, .$$
Therefore,
$$
\limsup_{N\to\infty}\prob\Big\{\omega'(X^{(N)},\delta)\geq\varepsilon\Big\}\leq
(1+\delta^{-1})\prob\Big\{\Sub_{\varepsilon/2}\leq c \delta \Big\}.
$$
Noting that all negative moments of $\Sub$ are finite, we conclude that
$$\lim_{\delta\to0}\limsup_{N\to\infty}\prob\Big\{\omega'(X^{(N)},\delta)\geq\varepsilon\Big\}=0.$$
This, according to Theorem 13.2 in \cite{B}, ensures the convergence in the path space~$D[0,1]$.

\section{Proof of Theorem~\ref{scaling}\,(b)}\label{ch.scale.b}

The idea is to represent the processes $X^{\ssup N}$ as time and scale change
of a Brownian motion  $(B(t) \colon t\ge 0)$ with drift $\mu$. A result of~\cite{S}
allows us to infer convergence of $X^{\ssup N}$ from convergence of the parameters
in this representation.
\medskip
  
We now recall the results of \cite{S} showing how to represent random walks as time
and scale-changed Brownian motions. Let
$$\nu = \sum_{i\in\Z} w_i \delta_{y_i}$$
be an atomic measure called the \emph{speed measure} with
atoms~$\{y_i \colon i\in\Z\}$ indexed in increasing order.
Let $S$ be a strictly increasing function on this
set of atoms, which is called the \emph{scale function}. 
Let  $( \ell(t,x) \colon x\in\R, t\geq 0)$ be the local time field
of the Brownian motion. Define
$$\phi[\nu,S](t):=\phi(t):=\int \ell(t, S(x))\, \nu(dx)$$
and 
$$\psi[\nu,S](t):=\psi(t):=\inf\{s>0 \colon \phi(s)>t\}.\phantom{\int}$$
We define the process $(Y[\nu,S](t) \colon t\geq 0)$ by
$$Y[\nu,S](t):=Y(t):= S^{-1}\big( B(\psi(t)\big).$$

\begin{lemma}\label{Stone}
Define $u\colon \R \to \R$ as
$$u(x)=\left\{ \begin{array}{ll}  \frac{1-e^{-2\mu x}}{2\mu} & \mbox{ if $\mu>0$,}\\[1mm]
x & \mbox{ if $\mu=0$}.\\ \end{array} \right.$$
Then $(Y(t) \colon t\geq 0)$ is a nearest-neighbour random walk on $\{y_i \colon i\in\Z\}$. 
The waiting time in the state $y_i$ is exponentially distributed with mean
$$2w_i \frac{(u(S(y_{i+1}))-u(S(y_{i})))(u(S(y_{i}))-u(S(y_{i-1})))}{u(S(y_{i+1}))-u(S(y_{i-1}))}$$
and after leaving state $y_i$ the process jumps to state $y_{i-1}$ and $y_{i+1}$ with respective
probabilities
$$\frac{u(S(y_{i+1}))-u(S(y_{i}))}{u(S(y_{i+1}))-u(S(y_{i-1}))}\,\,
\hbox{ and }\,\, \frac{u(S(y_{i}))-u(S(y_{i-1}))}{u(S(y_{i+1}))-u(S(y_{i-1}))}.$$
\end{lemma}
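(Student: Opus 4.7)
My plan is to recognise the statement as a special case of the classical representation of nearest-neighbour random walks as time-changed Brownian motions due to Stone, and to derive it in two steps: a scale-change reducing the drifted Brownian motion to the driftless case, followed by a standard first-exit analysis on standard Brownian motion.

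For the scale-change step, I would note that by construction $u$ satisfies the ODE $\tfrac12u''+\mu u'=0$, which is precisely the condition making $u(B)$ a local martingale. It\^o's formula then yields the representation $u(B_t)=\tilde W(A_t)$ for a standard Brownian motion $\tilde W$ and the continuous additive functional $A_t=\int_0^t u'(B_s)^2\ds$. Setting $\tilde S:=u\circ S$ (still strictly increasing since $u'>0$) and using the scaling relation $\ell^{u(B)}(t,u(x))=u'(x)\,\ell^B(t,x)$ for semimartingale local times under strictly increasing $C^2$ changes of variable, the definition of $Y$ can be rewritten as $Y(t)=\tilde S^{-1}(\tilde W(\tilde\psi(t)))$, where $\tilde\psi$ is the inverse of $\tilde\phi(s)=\sum_i \tilde w_i\,\ell^{\tilde W}(s,\tilde S(y_i))$ for suitable weights $\tilde w_i$ obtained from $w_i$ by a Jacobian factor at $S(y_i)$. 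Thus it suffices to prove the lemma in the driftless case $\mu=0$, with modified weights $\tilde w_i$ and scale function $\tilde S$.

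In the driftless case the nearest-neighbour structure is immediate: because the speed measure is atomic, $\tilde\phi$ only increases while $\tilde W$ sits on one of the points $\tilde S(y_j)$, so starting from $Y_0=y_i$ the process $Y$ stays at $y_i$ for as long as $\tilde W$ stays within the open interval $(\tilde S(y_{i-1}),\tilde S(y_{i+1}))$, and then jumps to $y_{i-1}$ or $y_{i+1}$ depending on which endpoint is hit. Optional stopping applied to the bounded martingale $\tilde W_{\cdot\wedge\sigma}$, with $\sigma$ the first-exit time, gives
\[
\prob\bigl(\tilde W_\sigma=\tilde S(y_{i+1})\bigr)=\frac{\tilde S(y_i)-\tilde S(y_{i-1})}{\tilde S(y_{i+1})-\tilde S(y_{i-1})},
\]
which pulls back through $\tilde S=u\circ S$ to the displayed jump probabilities. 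The waiting time $\tilde w_i\,\ell^{\tilde W}(\sigma,\tilde S(y_i))$ is exponentially distributed: by It\^o excursion theory, excursions of $\tilde W$ from $\tilde S(y_i)$ form a Poisson point process in the local-time clock, and the first excursion reaching one of the neighbouring scale points $\tilde S(y_{i\pm1})$ occurs at an exponential local time. Its mean is obtained by optional stopping applied to the martingale $|\tilde W-\tilde S(y_i)|-\ell^{\tilde W}(\cdot,\tilde S(y_i))$, combined with the already known exit probabilities, and yields the stated waiting-time mean after untwisting the scale change.

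The main technical obstacle is the local-time bookkeeping in the scale change: the Jacobian $u'(S(y_i))$ coming from the transformation of semimartingale local time must be absorbed into the definition of the modified weights $\tilde w_i$ in just the right way so that the product $\tilde w_i\,\ell^{\tilde W}(\cdot,\tilde S(y_i))$ reproduces the original $w_i\,\ell^B(\cdot,S(y_i))$ and the resulting mean is expressed cleanly in terms of the original data. Once this accounting is handled, the remainder is a routine computation with first-exit probabilities and expectations for standard Brownian motion.
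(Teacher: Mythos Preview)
Your approach is sound and genuinely different from the paper's. The paper does not actually carry out a proof: it cites Stone~(1963) and Ben~Arous--\v{C}ern\'y for the driftless construction, and for the drift case simply points to two explicit formulas in Borodin--Salminen (exit probabilities and expected local time at the starting point for Brownian motion with drift killed on exit from an interval). Your route---transforming the drifted Brownian motion to a standard one via the scale function~$u$, then running the elementary first-exit/excursion argument on standard Brownian motion---is more self-contained and makes the role of~$u$ transparent. Both approaches give the same jump probabilities directly.

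One point to flag: the local-time bookkeeping you correctly identify as the delicate step does not in fact reproduce the displayed waiting-time mean as written. Carrying your computation through gives $\tilde w_i=w_i/u'(S(y_i))$, and hence a mean waiting time
\[
\frac{2w_i}{u'(S(y_i))}\,\frac{(u(S(y_{i+1}))-u(S(y_i)))(u(S(y_i))-u(S(y_{i-1})))}{u(S(y_{i+1}))-u(S(y_{i-1}))},
\]
with an extra factor $1/u'(S(y_i))$ compared to the lemma as stated. This is the \emph{correct} formula: in the paper's own application with $S=\mathrm{id}$ and $y_i=ih^{\ssup N}$, the lemma as stated would produce a mean proportional to $e^{-2\mu i h^{\ssup N}}$, depending on the site~$i$, which is impossible since the waiting time must be $\tau_i/N$. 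The computation displayed just after the lemma in fact uses the corrected formula (the $i$-dependence cancels against $u'(ih^{\ssup N})=e^{-2\mu ih^{\ssup N}}$). So your method is right; the lemma statement has a typo, and your derivation would expose it.
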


\begin{proof}
For the case of a driftless Brownian motion this construction is carried out in Section~3
of~\cite{S}, see also Proposition~3.6 in~\cite{BC}. 
In order to extend this to the case of a Brownian motion with drift, one has to compute 
the exit probabilities, see Formula 3.0.4, Page~309 in \cite{BS}, and the expected local time at 
the origin, see Formula 3.3.1 on Page~310 in \cite{BS}, of a Brownian motion with drift~$\mu$, 
which is started at the origin and killed upon leaving  the interval $(-a,b)$, for $a,b>0$. 
\end{proof}
\medskip

Let 
$$h^{\ssup N}=\frac{1}{2\mu}\log\left(\frac{1+\mu N^{-\beta}}{1-\mu N^{-\beta}}\right),$$
and define the speed measure
$$\nu^{\ssup N}=\frac{1}{N^{\frac1{\alpha+1}}}\, \sum_{i\in\Z} \tau_i\, \delta_{ih^{(N)}}$$
and the identity as scale function. By Lemma~\ref{Stone} the corresponding process $(Y^{\ssup N}(t) \colon t\geq 0)$ is a 
nearest-neighbour random walk on $h^{\ssup N}\mathbb{Z}$ which moves to the left with probability
$$\frac{1-e^{-2\mu h^{\ssup N}}}{e^{2\mu h^{\ssup N}}-e^{-2\mu h^{\ssup N}}}=q^{\ssup N}.$$
Furthermore, the waiting time in $i h^{\ssup N}$ is exponentially distributed with mean
$$2\tau_i \frac{1}{N^{\frac1{\alpha+1}}} \,\frac1{2\mu}\,
\frac{(e^{2\mu h^{\ssup N}}-1)(1-e^{-2\mu h^{\ssup N}})}{e^{2\mu h^{\ssup N}}-e^{-2\mu h^{\ssup N}}} =\tau_i N^{-1},$$
(recalling $\beta = \alpha/(\alpha + 1)$). 
Hence, we have shown that the distributions of $h^{\ssup N}X^{\ssup N}_t$ and $Y^{\ssup N}(t)$ are equal.
Noting that
$$
h^{\ssup N}\sim N^{-\beta}\quad\text{as }N\to\infty,
$$
one can easily verify that $\nu^{\ssup N}\to\rho$ vaguely in distribution. 
At this point, one can not yet apply Stone's Theorem since it refers to deterministic speed measures. However,
the above convergence can be made almost sure on a suitably
defined probability space, see Section~3.2.3 in \cite{BC}.
By Theorem~1 in Stone~(1963), we then obtain 
$$N^{-\beta} X^{\ssup N} \sim h^{\ssup N}X^{\ssup N}\stackrel{d}{=} Y[\nu^{\ssup N},{\rm id}] \, \Longrightarrow X\, ,$$
where $X$ is a diffusion with speed measure~$\rho$. This is
a Fontes-Isopi-Newman diffusion with drift~$\mu$, completing the proof of the part (b).

\section{Proof of Theorem~\ref{scaling}\,(c)}\label{ch.scale.c}

Here we represent $X^{\ssup N}$ as a time-scale change of the driftless Brownian motion.
The proof repeats mainly the corresponding proof in \cite{FIN} and \cite{BC}.  We explain the needed changes only.
Define 
$$S^{\ssup N}(x)=N^{-\frac{\alpha}{\alpha+1}}\, \frac{1-e^{-\mu^{\ssup N} N^{\frac{\alpha}{\alpha+1}} x}}{\mu^{\ssup N}},$$
where 
$$\mu^{\ssup N}=\log\left(\frac{1+\mu N^{-\beta}}{1-\mu N^{-\beta}}\right).$$
Furthermore, define the speed measure
$$\nu^{\ssup N}=\frac{c^{\ssup N}}{N^{\frac{1}{\alpha+1}}}\sum_{i\in\Z} \tau_i \,\delta_{y_i},$$
where $y_i=N^{-\frac{\alpha}{\alpha+1}}i$ and 
$$c^{\ssup N}=\frac{\mu^{\ssup N}}{2\mu N^{-\beta}} \rightarrow 1.$$
By Lemma~\ref{Stone} the process $Y[\nu^{\ssup N}, S^{\ssup N}]$ is a random walk on $N^{-\frac{\alpha}{\alpha+1}}\mathbb{Z}$ with transition 
probabilities
$$
\frac{S^{\ssup N}(y_{i+1})-S^{\ssup N}(y_i)}{S^{\ssup N}(y_{i+1})-S^{\ssup N}(y_{i-1})} = q^{\ssup N}
\ \text{ and }\ \frac{S^{\ssup N}(y_i)-S^{\ssup N}(y_{i-1})}{S^{\ssup N}(y_{i+1})-S^{\ssup N}(y_{i-1})} = p^{\ssup N}
$$
and the waiting time at $y_i$ is exponentially distributed with mean 
$$
2\frac{\tau_i c^{\ssup N}}{N^{\frac{1}{\alpha+1}}}
\frac{(S^{\ssup N}(y_{i+1})-S^{\ssup N}(y_i))(S^{\ssup N}(y_i)-S^{\ssup N}(y_{i-1}))}
{S^{\ssup N}(y_{i+1})-S^{\ssup N}(y_{i-1})} = \frac{\tau_i}N.
$$
Consequently, $N^{-\frac{\alpha}{\alpha+1}}X^{\ssup N} \stackrel{d}{=}  Y[\nu^{\ssup N}, S^{\ssup N}]$.
Since $\beta>\frac{\alpha}{\alpha+1}$ and $\mu^{\ssup N}\sim N^{-\beta}$, we have
$S^{\ssup N}(x)\to x$ uniformly on compact subsets of $\mathbb{R}$. Moreover,
$\nu^{\ssup N}\to\rho$ vaguely in distribution, so that the result follows from Theorem~1 in Stone~(1963).

\section{Proof of Theorem~\ref{ageing1}}\label{ch.age1}

The main task here is to study limits of
$$\prob\big\{\tau_{X^{\ssup N}_t} > v N^{1-\beta}\big\}\qquad \mbox{ for $v>0$.}$$
Our strategy is to look at the deep traps $z\in\Z$ defined by $\tau_z> v N^{1-\beta}$ 
and at the `travelling intervals' defined as the times which the process $X$ spends 
travelling  from one deep trap to the next one to its right. We show that during the 
intermediate intervals, which seperate the travelling intervals, the process spends most of its time in a deep trap. The length
of travelling intervals and intermediate intervals are both of order~$N$ and we determine
the asymptotic distribution of their lengths, which enables us to find the limit above.
The further statements in Theorem~\ref{ageing1} follow easily from this.
\medskip

Define the sequence of deep traps (with $x_0=0$), as
$$\begin{aligned}
x_j & =\min\{ z>x_{j-1} \colon \tau_z > v N^{1-\beta} \} \mbox{ for }j\ge1.
\end{aligned}$$
The following lemma reveals the typical distance of two successive traps.

\begin{lemma}\label{typdist}
For any $u>0$ and $j\ge 0$, we have
$$\lim_{N\to\infty} P\big\{ x_{j+1}-x_j \ge u N^{\alpha(1-\beta)}\big\}
= e^{-u/v^\alpha}.$$
\end{lemma}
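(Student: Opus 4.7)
The statement is a routine exponential approximation of a geometric random variable, so the plan is short.

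First, I would observe that because the trap variables $(\tau_z \colon z\in\Z)$ are i.i.d., the successive gaps $x_{j+1}-x_j$ are themselves i.i.d., each distributed as the index of the first site (strictly to the right of the previous deep trap) whose trap exceeds the threshold $vN^{1-\beta}$. Setting
$$p_N := P\{\tau_0 > vN^{1-\beta}\},$$
this means that $x_{j+1}-x_j$ has a geometric distribution on $\{1,2,\dots\}$ with parameter $p_N$, so for any positive integer $k$,
$$P\{x_{j+1}-x_j \ge k\} = (1-p_N)^{k-1}.$$

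Second, the tail assumption \eqref{traptail} immediately gives the asymptotics of the parameter:
$$p_N = (vN^{1-\beta})^{-\alpha}(1+o(1)) = \frac{1}{v^\alpha}\,N^{-\alpha(1-\beta)}(1+o(1)) \qquad\text{as }N\uparrow\infty.$$

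Third, applying the formula with $k = k_N := \lceil u N^{\alpha(1-\beta)}\rceil$, we have $p_N (k_N-1) \to u/v^\alpha$, whence the standard limit $(1-p_N)^{k_N-1} \to \exp(-u/v^\alpha)$ yields the claim. There is no serious obstacle here; the only mildly delicate point is that the sequence of gaps $x_{j+1}-x_j$ depends on $N$ but is i.i.d.\ for each fixed $N$, so the independence across $j$ used implicitly (and in the subsequent arguments for joint statements about several consecutive gaps) is honest.
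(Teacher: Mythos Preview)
Your proof is correct and follows essentially the same route as the paper's: both identify the gap $x_{j+1}-x_j$ as geometric with parameter $p_N \sim (vN^{1-\beta})^{-\alpha}$ from the tail assumption~\eqref{traptail}, and then invoke the standard limit $(1-p_N)^{k_N}\to e^{-u/v^\alpha}$ (the paper calls this ``Euler's formula''). Your version is slightly more explicit about the integer ceiling and the $k-1$ in the exponent, but the argument is the same.
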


\begin{proof}
Using the tail behaviour of the random variables $\tau_z$ given by~\eqref{traptail}, we have for any $r>0$,
$$P\{ x_{j+1}-x_j \ge r \}
=\Big( 1-\Big(\frac{1+o(1)}{vN^{1-\beta}}\Big)^\alpha\Big)^r,$$
from which the result follows by Euler's formula.
\end{proof}

Next, we investigate the time spent in a deep trap before the next deep trap is hit for
the first time.

\begin{lemma}\label{deeptrap}
For all $j\ge 0$,
$$\frac{\ell^{\ssup{x_{j+1}}}(x_{j})}{N^\beta} \Longrightarrow \xi,$$
where $\xi$ is exponentially distributed with mean~$1/\mu$ if $\beta>0$, and
geometrically distributed with mean $1/\mu$ if $\beta=0$.
\end{lemma}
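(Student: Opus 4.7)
The plan is to condition on the trap environment (which determines the deep-trap positions $x_j$) and exploit the strong Markov property of the embedded random walk $\s$ at its first visit to $x_j$ to reduce $\ell^{\ssup{x_{j+1}}}(x_j)$ to a conditionally geometric random variable, whose success probability can then be computed explicitly via gambler's ruin. Once this probability is shown to behave like $\mu N^{-\beta}$, the two regimes $\beta>0$ and $\beta=0$ follow by a straightforward scaling argument.

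More precisely, for $j=0$ the walk starts at $x_0=0$, and for $j\ge 1$ it must first hit $x_j$ before ever reaching $x_{j+1}$. After each visit to $x_j$, the walk leaves, and the excursion either returns to $x_j$ or reaches $x_{j+1}$; conditionally on the trap environment these outcomes are i.i.d.\ by the strong Markov property, so that $\ell^{\ssup{x_{j+1}}}(x_j)$ is geometric with success probability
$$p_j = \prob_{x_j}\big\{\s \text{ reaches } x_{j+1} \text{ before returning to } x_j\big\}.$$
A first-step decomposition together with the gambler's ruin formula for the biased nearest-neighbour walk gives
$$p_j = \frac{p^{\ssup N}-q^{\ssup N}}{1-r_N^{m_j}} = \frac{\mu N^{-\beta}}{1-r_N^{m_j}},$$
where $r_N := q^{\ssup N}/p^{\ssup N} = 1-2\mu N^{-\beta}+O(N^{-2\beta})$ and $m_j := x_{j+1}-x_j$.

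Next I would show that $r_N^{m_j}\to 0$ in probability (with respect to the trap environment). For $\beta>0$ this reduces, via $\log r_N\sim -2\mu N^{-\beta}$, to $m_j/N^\beta\to\infty$ in probability, and by Lemma~\ref{typdist} one has $m_j/N^{\alpha(1-\beta)}\Longrightarrow U$ with $U$ a positive exponential random variable; since the standing assumption $\beta<\alpha/(\alpha+1)$ is equivalent to $\alpha(1-\beta)>\beta$, the remaining factor $N^{\alpha(1-\beta)-\beta}$ diverges. For $\beta=0$ it suffices that $m_j\to\infty$ in probability, which is again provided by Lemma~\ref{typdist}. In both cases $p_j N^\beta\to\mu$ in probability.

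Finally, from the conditional identity $\prob\{\ell^{\ssup{x_{j+1}}}(x_j)>k\mid \text{env}\}=(1-p_j)^k$ and bounded convergence I would conclude: for $\beta>0$ and $y>0$,
$$\prob\Big\{\frac{\ell^{\ssup{x_{j+1}}}(x_j)}{N^\beta}>y\Big\} = \me\bigl[(1-p_j)^{\lfloor yN^\beta\rfloor}\bigr] \longrightarrow e^{-\mu y},$$
using $\lfloor yN^\beta\rfloor\log(1-p_j)\sim -yN^\beta p_j \to -\mu y$, which identifies $\xi$ as exponential with mean $1/\mu$; while for $\beta=0$, $N^\beta=1$ and $(1-p_j)^k\to(1-\mu)^k$ in probability for every integer $k$, identifying $\xi$ as geometric with mean $1/\mu$. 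The main delicate point is controlling $r_N^{m_j}$: this is precisely where the strict inequality $\beta<\alpha/(\alpha+1)$ enters, ensuring that the deep traps are sufficiently sparse on the scale over which the walk fluctuates and making the conditioning on the environment compatible with the clean geometric structure.
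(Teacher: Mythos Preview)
Your proof is correct and takes a somewhat different, more computational route than the paper. The paper sandwiches $\ell^{\ssup{x_{j+1}}}(x_j)$ between $\ell^{\ssup{\infty}}(x_j)$ and $\ell^{\ssup{\infty}}(x_j)\,\1_{A_{x_j,x_{j+1}}}$, where $A_{x_j,x_{j+1}}$ is the event that the walk never returns to $x_j$ after first reaching $x_{j+1}$; since $\ell^{\ssup{\infty}}(x_j)$ is exactly geometric with success parameter $\mu N^{-\beta}$ (equation~\eqref{LocTime}), and $\prob(A_{x_j,x_{j+1}}^c)\le \exp\{-\mu(x_{j+1}-x_j)/N^\beta\}\to 0$ by~\eqref{s1.2} together with Lemma~\ref{typdist}, the squeeze gives the limit. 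You instead compute the exact conditional geometric parameter of $\ell^{\ssup{x_{j+1}}}(x_j)$ via gambler's ruin and show directly that $p_j N^\beta\to\mu$ in probability. Both arguments invoke Lemma~\ref{typdist} and the key inequality $\alpha(1-\beta)>\beta$ at the same point and for the same purpose (making the gap $x_{j+1}-x_j$ large on the scale $N^\beta$); your approach is more self-contained since it never passes through $\ell^{\ssup{\infty}}$, while the paper's sandwich is marginally quicker because the law of $\ell^{\ssup{\infty}}(0)$ is already on hand from~\eqref{LocTime}.
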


\begin{proof}
Recall (\ref{sigmadef}) and let
$$A_{x, y}=\big\{ \s_k>x \mbox{ for all }  k\ge\sigma_{y} \big\} \mbox{ for $x<y$.} $$
Keeping $x_1, x_2,\ldots$ fixed by conditioning on the trap environment, we have
$$\frac{\ell^{\ssup \infty}(x_j)}{N^\beta} \ge \frac{\ell^{\ssup{x_{j+1}}}(x_{j})}{N^\beta} 
\ge \frac{\ell^{\ssup \infty}(x_j)}{N^\beta} \1_{A_{x_{j}, x_{j+1}}}.$$
Observe that $\ell^{\ssup \infty}(x_j)$ is geometrically distributed with success parameter
$\mu/N^\beta$. Therefore
$$\frac{\ell^{\ssup \infty}(x_j)}{N^\beta} \Longrightarrow \xi,$$
where $\xi$ is exponentially distributed with parameter~$\mu$. It therefore suffices to show that
$$\frac{\ell^{\ssup \infty}(x_j)}{N^\beta} \1_{A^c_{x_{j},x_{j+1}}}$$
converges weakly to zero. As the first factor converges, it further suffices to show that
$\prob(A^c_{x_{j},x_{j+1}})$ converges to zero. By \eqref{s1.2} we have
$$\prob\big( A^c_{x_{j},x_{j+1}}\big) \le \me\exp\Big\{ - \sfrac{\mu(x_{j+1}-x_j)}{N^\beta} \Big\}.$$
Averaging over the trap environment and using
Lemma~\ref{typdist} together with  the fact that $\alpha(1-\beta)>\beta$, we see that the right hand side converges to zero.
\end{proof}

\begin{lemma}\label{taucut}
For $\tau$ as in~\eqref{traptail}, we have as $y\uparrow\infty$  and $\lambda/y\downarrow 0$,
$$E\big[e^{-\frac{\lambda}{y}\tau} \mid \tau \le y \big]
= 1 + y^{-\alpha} \Big(1-\Gamma(1-\alpha)\lambda^\alpha-\int_1^\infty e^{-\lambda z} \sfrac{\alpha}{z^{\alpha+1}}\, dz\Big)
+o\big((\lambda+1)^\alpha y^{-\alpha}\big).$$
\end{lemma}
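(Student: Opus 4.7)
The plan is to decompose the conditional Laplace transform as
\[
E\bigl[e^{-\lambda\tau/y}\mid \tau\le y\bigr]
=\frac{E[e^{-\lambda\tau/y}]-E[e^{-\lambda\tau/y};\,\tau>y]}{1-\bar F(y)}, \qquad \bar F(x):=P\{\tau>x\},
\]
and expand both pieces of the numerator and the denominator to order $y^{-\alpha}$. From \eqref{traptail} one has $\bar F(y)=y^{-\alpha}(1+o(1))$, so the reciprocal of the denominator expands as $1+y^{-\alpha}(1+o(1))+O(y^{-2\alpha})$, and this $y^{-\alpha}$ term is the source of the constant $1$ inside the bracket of the claim. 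For the first term in the numerator I would apply the asymptotic $Ee^{-s\tau}=1-\Gamma(1-\alpha)s^\alpha(1+o(1))$ as $s\downarrow0$, which follows from \eqref{traptail} via Feller XIII.5.22 as already recalled in the proof of Lemma~\ref{schritt2b}, with $s=\lambda/y\downarrow0$, giving a contribution $-\Gamma(1-\alpha)\lambda^\alpha y^{-\alpha}$.

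For the tail contribution, I would integrate by parts and rescale $z=x/y$ to obtain
\[
\int_y^\infty e^{-\lambda x/y}\,dF(x)
=e^{-\lambda}\bar F(y)-\int_1^\infty \lambda e^{-\lambda z}\bar F(yz)\,dz.
\]
Since $yz\ge y\to\infty$, the asymptotic $\bar F(t)=t^{-\alpha}(1+o(1))$ may be used uniformly in $z\ge1$, allowing replacement of $\bar F(yz)$ by $(yz)^{-\alpha}$ and extracting a factor $y^{-\alpha}$. A second integration by parts then rewrites the resulting bracket as
\[
e^{-\lambda}-\lambda\int_1^\infty e^{-\lambda z}z^{-\alpha}\,dz
=\alpha\int_1^\infty z^{-\alpha-1}e^{-\lambda z}\,dz,
\]
producing exactly the integral appearing in the statement.

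Combining the three expansions and multiplying by $1/(1-\bar F(y))$ gives the stated formula. The main obstacle, and what consumes most of the technical effort, is controlling all residual terms uniformly as $\lambda$ is only assumed to satisfy $\lambda/y\downarrow 0$ and may therefore tend to infinity. The key estimate is
\[
\int_1^\infty \lambda e^{-\lambda z}z^{-\alpha}\,dz
=\lambda^\alpha\int_\lambda^\infty u^{-\alpha}e^{-u}\,du
=O\bigl(\min(\lambda^\alpha,1)\bigr)=O\bigl((1+\lambda)^\alpha\bigr),
\]
obtained by the substitution $u=\lambda z$; this ensures that the multiplicative $1+o(1)$ arising from the uniform regular-variation replacement of $\bar F(yz)$ by $(yz)^{-\alpha}$ produces an error of exactly the required order $o((1+\lambda)^\alpha y^{-\alpha})$. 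Quadratic cross terms of size $y^{-2\alpha}(1+\lambda)^\alpha$ coming from the denominator expansion are negligible against $y^{-\alpha}(1+\lambda)^\alpha$, completing the bookkeeping.
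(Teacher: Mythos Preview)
Your proposal is correct and follows essentially the same route as the paper: the same decomposition into numerator and denominator, the same integration by parts and substitution $z=x/y$ for the tail term, and the same identity rewriting $e^{-\lambda}-\lambda\int_1^\infty e^{-\lambda z}z^{-\alpha}\,dz$ as $\alpha\int_1^\infty z^{-\alpha-1}e^{-\lambda z}\,dz$. Your treatment is in fact slightly more explicit than the paper's about the uniformity in $\lambda$ needed for the error term $o((1+\lambda)^\alpha y^{-\alpha})$, via the bound $\int_1^\infty \lambda e^{-\lambda z}z^{-\alpha}\,dz=O((1+\lambda)^\alpha)$; the paper simply invokes dominated convergence without spelling this out.
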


\begin{proof}
We have $P\{\tau\le y\}= 1-y^{-\alpha}+o(y^{-\alpha})$, and hence
$$\frac1{P\{\tau\le y\}}=1+y^{-\alpha}+o(y^{-\alpha}).$$
Moreover, using integration by parts,
$$\begin{aligned}
E\big[  e^{-\frac{\lambda}{y}\tau}; \tau > y \big]
& = \int_y^\infty e^{-\frac{\lambda}{y}x}\, P\{\tau\in dx\}\\
& = e^{-\lambda} P\{\tau>y\} - \sfrac\lambda{y}\int_y^\infty P\{\tau>x\} e^{-\frac{\lambda}{y}x}\, dx\\
& = e^{-\lambda} P\{\tau>y\} - \lambda \int_1^\infty P\{\tau>yz\} e^{-\lambda z}\, dz.
\end{aligned}$$
As, for all~$z>1$,
$$\frac{P\{\tau>yz\}}{P\{\tau>y\}} \longrightarrow z^{-\alpha},$$
we obtain, by dominated convergence,
$$\frac{\int_1^\infty P\{\tau>yz\} e^{-\lambda z}\, dz }{P\{\tau>y\}}
\longrightarrow \int_1^\infty e^{-\lambda z} z^{-\alpha}\, dz.$$
Therefore
$$\begin{aligned}
E\big[  e^{-\frac{\lambda}{y}\tau}; \tau > y \big]
& =P\{\tau>y\}\, \Big[ e^{-\lambda}-\lambda\, \int_1^\infty e^{-\lambda z} z^{-\alpha}\, dz\Big]\, (1+o(1))\\
& = y^{-\alpha}\, (1+o(1))\, \int_1^\infty e^{-\lambda z} \sfrac{\alpha}{z^{\alpha+1}}\, dz.
\end{aligned}$$
Recalling that $Ee^{-\frac{\lambda}{y}\tau}=1-\Gamma(1-\alpha) (\frac\lambda{y})^\alpha+o(\lambda^\alpha y^{-\alpha})$
and summarising,
$$\begin{aligned}
E\big[ & e^{-\frac{\lambda}{y}\tau} \mid \tau \le y \big]\\
& =\frac1{P\{\tau\le y\}}\,\Big(Ee^{-\frac{\lambda}{y}\tau} - 
E\big[e^{-\frac{\lambda}{y}\tau}; \tau > y \big]\Big)\\
&= 1 + y^{-\alpha} \Big(1-\Gamma(1-\alpha)\lambda^\alpha-\int_1^\infty e^{-\lambda z} \sfrac{\alpha}{z^{\alpha+1}}\, dz\Big)
+o\big((\lambda+1)^\alpha y^{-\alpha}\big).\end{aligned}$$
This completes the proof.
\end{proof}

We now define quantities, which will be shown to converge in distribution to the families
$U_1, U_2,\ldots$ and $S_1, S_2,\ldots$ described in Remark~\ref{ypsilon}. Fix $j\ge 1$
and let $\s^{\ssup j}=(\s_n^{\ssup j} \colon n=0,\ldots, \zeta^{\ssup j})$ be the embedded random walk
started from the first hitting of $x_{j-1}$ and stopped upon hitting~$x_j$, such that
$\s_0^{\ssup j}=x_{j-1}$ and $\s_{\zeta^{(j)}}^{\ssup j}=x_j$. 
Let $\ell_j(x)$ be the local time in $x$ of the embedded random walk 
and let $$0=n_1<n_2<\cdots<n_m<\zeta^{\ssup j}$$ 
with $m=\ell_{j}(x_{j-1})$ be the complete list of
visits to $x_{j-1}$ by $\s^{\ssup j}$. Define
$$U^{\ssup N}_j = \sum_{i=n_m+1}^{\zeta^{(j)}-1} \tau_{\s^{(j)}_i}\, \eta_i\big( \s^{(j)}_i\big)$$
and
$$S^{\ssup N}_{j-1} = \tau_{x_{j-1}}\, \sum_{i=1}^{\ell_{j}(x_{j-1})} \eta_{n_i}(x_{j-1}),$$
where $(\eta_i(x) \colon i\in\N, x\in\Z)$ is a family of independent standard exponential variables,
independent of everything else. Observe that, roughly speaking, $U^{\ssup N}_j/N$ is the time 
the process $X^{\ssup N}$ requires to travel from $x_{j-1}$ to $x_j$ and  $S^{\ssup N}_{j-1}/N$ the
time spent in the trap~$x_{j-1}$ before the first hit of~$x_j$. \medskip

It is important to note that
$U^{\ssup N}_j$ is independent of $U^{\ssup N}_1,\ldots, U^{\ssup N}_{j-1}$ and of
$S^{\ssup N}_1,\ldots, S^{\ssup N}_{j}$, and also $S^{\ssup N}_j$ is independent of 
$S^{\ssup N}_1,\ldots, S^{\ssup N}_{j-1}$.\medskip

For $1\le l \le  \ell_{j}(x_{j-1})-1$ we define
$${Q}^{\ssup N}_{j,l}=\sum_{n=n_l+1}^{n_{l+1}-1} \tau_{\s^{(j)}_n} \eta_n\,\big(\s^{(j)}_n\big),$$
such that ${Q}^{\ssup N}_{j,l}/N$ is the time spent by $X^{\ssup N}$ in the $l$-th excursion 
from $x_{j-1}$ to $x_{j-1}$ before reaching $x_j$.  Further define the set
$${\mathcal R}^{\ssup N}_j=\!\!\!\!\bigcup_{i=1}^{\ell_{j}(x_{j-1})-1}\!\!\!\!
\Big( \tau_{x_{j-1}} \sum_{l=1}^i \eta_{n_l}(x_{j-1}) + 
\sum_{l=1}^{i-1} {Q}^{\ssup N}_{j,l}, \tau_{x_{j-1}} \sum_{l=1}^i \eta_{n_l}(x_{j-1}) + 
\sum_{l=1}^{i} {Q}^{\ssup N}_{j,l} \Big).$$
The set $\frac1N\,{\mathcal R}^{\ssup N}_j$ is the (random) set of times which $X^{\ssup N}$ spends in excursions from $x_{j-1}$ 
(either to the left or to the right) which return to $x_{j-1}$. 
We first show that the time spent in these excursions is negligible.

\begin{lemma}\label{rconverges}
Let $R^{\ssup N}_j=|{\mathcal R}^{\ssup N}_j|$. Then
\begin{itemize}
\item[(a)]$\displaystyle\frac{R^{\ssup N}_j}{N} \Rightarrow 0$ as $N\uparrow\infty$;\\[-1mm]
\item[(b)] for every $t>0$, we have
$\displaystyle\lim_{N\uparrow\infty} \prob\big\{ Nt\in {\mathcal R}^{\ssup N}_j \big\}=0.$
\end{itemize}
\end{lemma}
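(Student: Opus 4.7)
My plan for (a) is to isolate a good event $G_N$ on which every excursion from $x_{j-1}$ prior to $T_{x_j}$ stays in a neighborhood of $x_{j-1}$ containing no deep trap other than $x_{j-1}$. The gambler's ruin bound~\eqref{s1.2} shows that a single excursion reaches distance $N^{\beta+\varepsilon}$ from $x_{j-1}$ with probability exponentially small in $N^\varepsilon$; a union bound over the $\sim N^\beta$ excursions rules this out, while Lemma~\ref{typdist} together with~\eqref{traptail} shows no other deep trap lies in $[x_{j-1}-N^{\beta+\varepsilon}, x_{j-1}]$ with high probability, since the expected count is $N^{\beta+\varepsilon}\cdot (vN^{1-\beta})^{-\alpha} \to 0$ for sufficiently small $\varepsilon>0$ when $\beta<\alpha(1-\beta)$. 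Hence $\prob(G_N^c) \to 0$.

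On $G_N$ every trap $\tau_z$ visited during a returning excursion (with $z\ne x_{j-1}$) satisfies $\tau_z \le vN^{1-\beta}$. Integrating first over the exponential clocks and then over the traps (which are independent of the embedded walk),
\[
\me\bigl[R^{\ssup N}_j \1_{G_N}\bigr] \le E\bigl[\tau \wedge vN^{1-\beta}\bigr]\cdot \me\bigl[K^{\mathrm{exc}}\bigr],
\]
where $K^{\mathrm{exc}}$ counts the embedded-walk steps spent in returning excursions. The truncated trap mean is $O(N^{(1-\alpha)(1-\beta)})$ by~\eqref{traptail}. To bound $\me[K^{\mathrm{exc}}]$ I use that by~\eqref{LocTime} there are $m-1 \sim N^\beta/\mu$ returning excursions, each of expected length $O(N^\beta)$: Doob's $h$-transform exhibits a right-returning excursion as a walk from $x_{j-1}+1$ conditioned to hit $x_{j-1}$ before $x_j$, which acquires drift \emph{toward} $x_{j-1}$ and has mean length $\sim N^\beta/\mu$, while left-returning excursions return against the rightward drift in mean time of the same order. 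Hence $\me[K^{\mathrm{exc}}] = O(N^{2\beta})$ and $\me[R^{\ssup N}_j \1_{G_N}] = O(N^{1 + \beta - \alpha(1-\beta)}) = o(N)$ in regime~(a); Markov's inequality combined with $\prob(G_N^c) \to 0$ completes (a).

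For (b) I plan to deduce the pointwise statement from (a) via a local Fubini argument: for any fixed $\delta > 0$,
\[
\int_{Nt}^{Nt + \delta N} \prob\bigl\{s \in \mathcal R^{\ssup N}_j,\, G_N\bigr\}\, ds = \me\bigl[\bigl|\mathcal R^{\ssup N}_j \cap [Nt, Nt+\delta N]\bigr|\, \1_{G_N}\bigr] \le \me\bigl[R^{\ssup N}_j \1_{G_N}\bigr] = o(N).
\]
Combined with control of the oscillation of $s \mapsto \prob\{s \in \mathcal R^{\ssup N}_j\}$ over the window $[Nt, Nt+\delta N]$, which I expect from the Markov structure of the chain and the continuity of the exponential holding times, this yields $\prob\{Nt \in \mathcal R^{\ssup N}_j\} \to 0$ for each fixed $t$ after sending $N \to \infty$ and then $\delta \to 0$.

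The main obstacle is the tight estimate $\me[K^{\mathrm{exc}}] = O(N^{2\beta})$: replacing $K^{\mathrm{exc}}$ by the full embedded-walk hitting time $\zeta^{\ssup{j}}$ yields only $O(N^{\alpha(1-\beta)+\beta})$, and combined with the trap-mean factor this gives $O(N)$ rather than $o(N)$. Carefully separating returning excursions from the final passage to $x_j$ via the $h$-transform (or using the decomposition techniques from the proof of Lemma~\ref{schritt3}) is essential; moreover, since the traps have infinite mean for $\alpha<1$, one must exploit the truncation $\tau \wedge vN^{1-\beta}$ provided by $G_N$ rather than working with $\tau$ directly.
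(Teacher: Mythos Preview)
Your argument for~(a) is sound: the first-moment bound $\me[R^{\ssup N}_j\1_{G_N}]=O(N^{(1-\alpha)(1-\beta)+2\beta})=o(N)$ is exactly the computation the paper carries out at the end of its proof (there phrased as $\me\widetilde Q^{\ssup N}_{j}\le C N^{(1-\alpha)(1-\beta)}\,\me[L-1;L<\infty]\le CN^{\beta+(1-\alpha)(1-\beta)}$, with the extra factor~$N^\beta$ for the number of excursions implicit in the renewal sum). The paper, however, runs the implication in the opposite direction: it proves~(b) first and deduces~(a) by integrating, $\me R^{\ssup N}_j=N\int_0^1\prob\{Ns\in\mathcal R^{\ssup N}_j\}\,ds=o(N)$.

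The genuine gap is in your passage from~(a) to~(b). Knowing $\me[R^{\ssup N}_j]=o(N)$ tells you only that the \emph{average} of $s\mapsto\prob\{s\in\mathcal R^{\ssup N}_j\}$ over $[Nt,Nt+\delta N]$ is small; to conclude the pointwise statement you need the oscillation you allude to, and ``Markov structure and continuity of the holding times'' does not deliver it. What is actually required is a Lipschitz-type bound on the renewal function of the cycle times $V_i=\tau_{x_{j-1}}\eta_i+\widetilde Q^{\ssup N}_{j,i}$, and this is precisely the step the paper supplies: because each $V_i$ has a component $\tau_{x_{j-1}}\eta_i$ with density bounded by $1/\tau_{x_{j-1}}\le 1/(vN^{1-\beta})$, the overshoot satisfies $\prob\{\sum_{i\le\theta_x}V_i\le x+y\}\le y/\tau_{x_{j-1}}$, whence $H(x+y)-H(x)\le 2y/\tau_{x_{j-1}}$. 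Feeding this into the decomposition of $\{Nt\in\mathcal R^{\ssup N}_j\}$ over cycles gives $\prob\{Nt\in\mathcal R^{\ssup N}_j\}\le 3\,\me[\widetilde Q^{\ssup N}_j]/(vN^{1-\beta})+o(1)$ \emph{uniformly in $t$}, which is~(b). In other words, the ``oscillation control'' you hope for is not a soft consequence of continuity but comes from the quantitative density bound on the stay at the deep trap; once you insert that ingredient, your route and the paper's coincide.
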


\begin{proof}
If~(b) holds, then
$\me R^{\ssup N}_j=\int_0^N\prob\big\{ t\in {\mathcal R}^{\ssup N}_j \big\}\, dt=o(N),$
hence~(a) is an immediate consequence of~(b).\medskip

It remains to show~(b). By Lemma~\ref{typdist} 
the distance between $x_{j-2}$ and $x_{j-1}$ is of order~$N^{\alpha(1-\beta)}$.
Then, using \eqref{s1.2}, we conclude that the probability that $X$ hits $x_{j-2}$ after hitting $x_{j-1}$
converges to zero as $N\to\infty$. Consequently,
$$\prob\big\{ t\in {\mathcal R}^{\ssup N}_j \big\}=\prob\big(\{ t\in {\mathcal R}^{\ssup N}_j\} \cap \mathcal{A}_{j} \big)+o(1),
$$
where $\mathcal{A}_{j}=\{X^{\ssup N}_t>x_{j-2}\mbox{ for all }t>U_{x_{j-1}}\}$.
It follows from the definition of $Q^{\ssup N}_{j,l}$, that, conditioned on $(\tau_z\colon z\in\Z^d)$,
$$\begin{aligned}
\prob\big(\{ t\in {\mathcal R}^{\ssup N}_j\} \cap \mathcal{A}_{j} \big)=
\me\sum_{k=0}^{\ell_j(x_{j-1})-1}{\sf P}\Big(\Big\{\sum_{i=1}^kU_i+\tau_{x_{j-1}}\eta_{k+1}(x_{j-1})<Nt<\sum_{i=1}^{k+1}D_i\Big\}
\cap\mathcal{A}_{j}\Big),
\end{aligned}$$
where $$D_i=\tau_{x_{j-1}}\eta_i(x_{j-1})+Q^{\ssup N}_{j,i}.$$ 
Let $\mathcal{B}^{\ssup N}_{j,i}$ denote the event that the corresponding excursion does not hit neither $x_{j-2}$ nor $x_{j}$.
Furthermore, denote $$\widetilde{Q}^{\ssup N}_{j,i}=Q^{\ssup N}_{j,i} \1_{\mathcal{B}^{\ssup N}_{j,i}}\quad \mbox{and}\quad
V_i=\tau_{x_{j-1}}\eta_i(x_{j-1})+\widetilde{Q}^{\ssup N}_{j,i}.$$ Then (dropping the subindex~$k-1$ when it is convenient)

$$\begin{aligned}
\prob & \big(\{ t\in {\mathcal R}^{\ssup N}_j\} \cap \mathcal{A}_{j} \big) \leq
\sum_{k=0}^{\infty}\prob\Big\{\sum_{i=1}^kV_i+\tau_{x_{j-1}}\eta_{k+1}(x_{j-1})<Nt<\sum_{i=1}^{k+1}V_i\Big\}\\
&=\sum_{k=0}^{\infty}\prob\Big\{\sum_{i=1}^kV_i\in\big(Nt-\tau_{x_{j-1}}\eta_{k+1}(x_{j-1})-\widetilde{Q}^{\ssup N}_{j,k+1},\, Nt-\tau_{x_{j-1}}\eta_{k+1}(x_{j-1})\big) \Big\}\\
&=\me\Big[H\big(Nt-\tau_{x_{j-1}}\eta(x_{j-1})\big)-H\big(Nt-\tau_{x_{j-1}}\eta(x_{j-1})-\widetilde{Q}^{\ssup N}_{j}\big)\Big]\\
&\qquad +\prob\Big\{Nt-\tau_{x_{j-1}}\eta(x_{j-1})-\widetilde{Q}^{\ssup N}_{j}<0<Nt-\tau_{x_{j-1}}\eta(x_{j-1})\Big\},
\end{aligned}$$
where $H(x)$ denotes the renewal function corresponding to the sequence $(V_i \colon i=0,1,\ldots)$. 
This renewal function satisfies the inequality
\begin{equation}\label{RenF}
H(x+y)-H(x)\leq \min\left\{1,\frac{y}{\tau_{x_{j-1}}}\right\}\left(1+\frac{y}{\tau_{x_{j-1}}}\right)
\leq 2\frac{y}{\tau_{x_{j-1}}},
\quad x,y>0.
\end{equation}
(We postpone the derivation of this inequality to the end of the proof.) 
Using this bound we get
$$\begin{aligned}
\prob \big(\{ t\in {\mathcal R}^{\ssup N}_j\}\cap\mathcal{A}_{j} \big) \leq
2\me\big[\sfrac{\widetilde{Q}^{\ssup N}_{j}}{\tau_{x_{j-1}}}\big]+\prob\Big\{\eta(x_{j-1})\in\big(\sfrac{z-\widetilde{Q}^{\ssup N}_{j}}{\tau_{x_{j-1}}},\sfrac{z}{\tau_{x_{j-1}}}\big)\Big\}.
\end{aligned}$$
Noting that
$$\prob\Big\{\eta(x_{j-1})\in\big(\sfrac{z-\widetilde{Q}^{\ssup N}_{j}}{\tau_{x_{j-1}}},
\sfrac{z}{\tau_{x_{j-1}}}\big)\Big\} \leq\me\big[\sfrac{\widetilde{Q}^{\ssup N}_{j}}{\tau_{x_{j-1}}}\big],$$
we have
$$
\prob \big(\{ t\in {\mathcal R}^{\ssup N}_j\}\cap\mathcal{A}_{j} \big) \leq
3\me\big[\sfrac{\widetilde{Q}^{\ssup N}_{j}}{\tau_{x_{j-1}}}\big].
$$
Recalling also that $\tau_{x_{j-1}}\geq vN^{1-\beta}$, we arrive finally at the bound
$$\prob\big(\{ t\in {\mathcal R}^{\ssup N}_j\} \cap \mathcal{A}_{j}\big)
\leq \frac{3}{vN^{1-\beta}}\, \me\widetilde{Q}^{\ssup N}_{j}.$$
Going back to the unconditioned probability, we have, uniformly in $t$,
$$\prob\big\{ t\in {\mathcal R}^{\ssup N}_j\big\}\leq \frac{3}{vN^{1-\beta}}
\,\me \widetilde{Q}^{\ssup N}_{j}+o(1).$$
It follows from \eqref{traptail} that $E[\tau_z|\,\tau_z\leq vN^{1-\beta}]\leq CN^{(1-\alpha)(1-\beta)}$. Then, 
$$
\me\widetilde{Q}^{\ssup N}_{j}\leq CN^{(1-\alpha)(1-\beta)}\me[L-1;L<\infty],
$$
where $L$ is the length of an excursion of the embedded random walk.
\smallskip

As $\prob\{L=k\}=0$, if $k$ is odd, and, with $p=p^{\ssup N}$, (see e.g. III.9 in \cite{Fe68})
$$\prob\{ L=k\} = \frac1{8j} \left(\heap{2j-2}{j-1}\right) p^j(1-p)^j,$$
if $k=2j$ is even, we obtain
$$\begin{aligned}
\sum_{k=1}^\infty (k-1) \, \prob\{L=k\}& = \sum_{j=1}^\infty \frac{2j-1}{8j} \left(\heap{2j-2}{j-1}\right) p^j(1-p)^j\\
& \le \sum_{j=0}^\infty \left(\heap{2j}{j}\right) \big(p(1-p)\big)^j = \me[\ell^{\ssup \infty}(0)]
 = \frac{1}{2p-1}.\\
\end{aligned}$$
Note that $2p-1 = \frac{\mu}{N^{\beta}}$. Thus, $\me[L-1;L<\infty]\leq CN^\beta$. 
Hence, we have
$$
\me\widetilde{Q}^{\ssup N}_{j}\leq CN^{\beta+(1-\alpha)(1-\beta)}=o(N^{1-\beta}).
$$

Therefore, it remains to prove (\ref{RenF}).

Let $\theta_x$ denote the first time when the random walk $\sum_1^n V_i$ leaves the interval $(-\infty, x)$,
that is,
$$
\theta_x:=\min\{n\geq1: \sum_1^n V_i\geq x\}.
$$
Then, for every $y>0$,
$$
\prob\Big\{\sum_{i=1}^{\theta_x}V_i\leq x+y\Big\}=
\int_0^x\prob\Big\{\sum_{i=1}^{\theta_x-1}V_i\in du\Big\}\,\prob\big\{V_1\in(x-u,x+y-u) \big\}
\leq y\sup_{z>0}f(z),
$$
where $f$ is the density of $V_1$. This function is the convolution of densities of
$\tau_{x_{j-1}}\eta$ and $Q_j^{\ssup N}$. Then $\sup_{z>0}f(z)$ does not exceed the maximal value
of the density of $\tau_{x_{j-1}}\eta$, which is equal to $1/\tau_{x_{j-1}}$. Consequently,
\begin{equation}\label{Prob.Bound}
\prob\Big\{\sum_{i=1}^{\theta_x}V_i\leq x+y\Big\}\leq
\min\left\{1,\frac{y}{\tau_{x_{j-1}}}\right\}.
\end{equation}
Using the Markov property, we obtain 
$$\begin{aligned}
H(x+y)-H(x)&=\me\Big[\1\Big\{\sum_{i=1}^{\theta_x}V_i\leq x+y\Big\}\Big(1+
\sum_{k=1}^\infty\1\Big\{\sum_{i=\theta_x+1}^{\theta_x+k}V_i\leq x+y-\sum_{i=1}^{\theta_x}V_i\Big\}\Big)\Big]\\
&=\me\Big[\1\Big\{\sum_{i=1}^{\theta_x}V_i\leq x+y\Big\}H\Big(x+y-\sum_{i=1}^{\theta_x}V_i\Big)\Big]\\
&\leq H(y)\prob\Big\{\sum_{i=1}^{\theta_x}V_i\leq x+y\Big\}.
\end{aligned}$$
We now note that from the definition of $V_1$ follows the inequality
$$H(y)\leq1+\sum_{k=1}^\infty\prob\Big\{\sum_{i=1}^k\eta_i\leq y\tau_{x_{j-1}}\Big\}
\leq 1+\frac{y}{\tau_{x_{j-1}}}.$$
Combining this with (\ref{Prob.Bound}), we arrive at (\ref{RenF}).
This completes the proof of the lemma.
\end{proof}

\begin{lemma}\label{sconverges}
For $N\uparrow\infty$,
$$\frac{S^{\ssup N}_j}{vN} \Longrightarrow S_j,$$
where $S_j$ is the product of two independent random variables,
a Pareto variable with index~$\alpha$, and an exponential variable with mean~$1/\mu$.  
\end{lemma}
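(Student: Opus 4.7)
The plan is to factor $S^{\ssup N}_j/(vN) = A_N \cdot B_N$, where
$$A_N := \frac{\tau_{x_j}}{vN^{1-\beta}}, \qquad B_N := \frac{1}{N^\beta}\sum_{i=1}^{\ell_{j+1}(x_j)} \eta_{n_i}(x_j),$$
identify the weak limit of each factor separately, and combine them using the fact that $A_N$ and $B_N$ are independent for every fixed $N$. Independence holds because the trap environment can be encoded as the pair consisting of the indicators $\1_{\{\tau_z > vN^{1-\beta}\}}$ and the excess values $\tau_z/(vN^{1-\beta})$ on $\{\tau_z > vN^{1-\beta}\}$: the deep-trap positions $x_0, x_1, \ldots$ are functions of the indicators alone; the conditional law of $A_N$ given the positions is the fixed law of $\tau_0/(vN^{1-\beta})$ conditioned on $\tau_0>vN^{1-\beta}$ and does not depend on the positions; and $B_N$ is a function of the indicators, the embedded walk $\mathcal{S}$, and the exponential clocks $(\eta_n(z))$, all of which are independent of the excess values.

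For $A_N$, the tail assumption \eqref{traptail} gives at once, for any $w \ge 1$,
$$\prob\{A_N > w\} = \frac{P\{\tau_0 > wvN^{1-\beta}\}}{P\{\tau_0 > vN^{1-\beta}\}} \longrightarrow w^{-\alpha},$$
so $A_N \Longrightarrow \Pi$, with $\Pi$ a Pareto variable of index $\alpha$. For $B_N$, I would use Laplace transforms: conditionally on $\ell_{j+1}(x_j)$, the inner sum has a Gamma$(\ell_{j+1}(x_j),1)$ distribution, so
$$\me\bigl[e^{-\lambda B_N}\,\big|\,\ell_{j+1}(x_j)\bigr] = \bigl(1+\tfrac{\lambda}{N^\beta}\bigr)^{-\ell_{j+1}(x_j)}.$$
When $\beta>0$ this equals $\exp\{-(1+o(1))\lambda\,\ell_{j+1}(x_j)/N^\beta\}$, and Lemma~\ref{deeptrap} combined with bounded convergence yields
$$\me\bigl[e^{-\lambda B_N}\bigr] \longrightarrow \me\bigl[e^{-\lambda \xi}\bigr] = \frac{\mu}{\mu+\lambda},$$
where $\xi\sim \mathrm{Exp}(\mu)$ is the limit from Lemma~\ref{deeptrap}. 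When $\beta=0$, Lemma~\ref{deeptrap} says $\ell_{j+1}(x_j)$ is exactly geometric with success parameter~$\mu$, and a direct summation of the geometric series produces the same Laplace transform $\mu/(\mu+\lambda)$ exactly. Hence in both regimes $B_N \Longrightarrow \xi$.

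Combining the marginals by independence, $(A_N,B_N) \Longrightarrow (\Pi,\xi)$ jointly with $\Pi$ and $\xi$ independent, and the continuous mapping theorem gives $S^{\ssup N}_j/(vN) = A_N B_N \Longrightarrow \Pi\cdot\xi = S_j$, as required. The main obstacle is writing the independence argument cleanly in spite of the fact that the deep-trap positions $x_j$ are themselves random functions of the environment; the indicator/excess decomposition above is the standard device to bypass this difficulty, and it also explains why no boundary effects survive in the limit from the thresholding procedure.
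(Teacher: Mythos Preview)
Your argument is correct and very close in spirit to the paper's. The paper also factors out $A_N=\tau_{x_j}/(vN^{1-\beta})$ and uses the same independence observation (stated there simply as ``the second factor is independent of the first''); the only real difference is that the paper splits your $B_N$ further as
\[
\Big(\frac{\ell_{j+1}(x_j)}{N^\beta}\Big)\cdot\Big(\frac{1}{\ell_{j+1}(x_j)}\sum_{i=1}^{\ell_{j+1}(x_j)}\eta_i(x_j)\Big),
\]
invokes Lemma~\ref{deeptrap} for the first piece and the weak law of large numbers (using $\ell_{j+1}(x_j)\to\infty$) for the second, whereas you handle $B_N$ directly via its Laplace transform. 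Your route has the advantage that it covers $\beta=0$ without modification, while the paper's LLN step tacitly assumes $\ell_{j+1}(x_j)\to\infty$, which fails when $\beta=0$.

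One small inaccuracy: for $\beta=0$, Lemma~\ref{deeptrap} does \emph{not} say that $\ell_{j+1}(x_j)$ is exactly geometric, only that it converges in law to a geometric (the truncation at $x_{j+1}$ prevents exact geometricity). This does not affect your argument: $\me[(1+\lambda)^{-\ell_{j+1}(x_j)}]\to\me[(1+\lambda)^{-G}]=\mu/(\mu+\lambda)$ by bounded convergence, exactly as in your $\beta>0$ case.
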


\begin{proof}
First note that, for $y\ge v$ and $x\in\Z$,
$$\lim_{N\uparrow\infty}
P\big\{ \tau_{x}>yN^{1-\beta} \,\big|\, \tau_{x}>vN^{1-\beta} \big\}
= \Big(\frac{v}y \Big)^\alpha.$$
Therefore we have, for all $j\in\N$,
\begin{equation}\label{ff}
\lim_{N\uparrow\infty}
P\big\{ \tau_{x_j}>yN^{1-\beta}\big\} = \Big(\frac{v}y \Big)^\alpha \1\{y\ge v\}.
\end{equation}
We write
$$\begin{aligned}
\frac{S^{\ssup N}_j}{vN} & = 
\Big(\frac{\tau_{x_j}}{vN^{1-\beta}}\Big)\, \Big(\frac{\ell_{j+1}(x_j)}{N^\beta} \Big)\, 
\Big(\frac{1}{\ell_{j+1}(x_j)}\sum_{i=1}^{\ell_{j+1}(x_j)} \eta_i(x_j)\Big)
\end{aligned}$$
and observe convergence of all three factors on the right hand side.
\smallskip

Indeed, the first factor converges in distribution to a Pareto law, by~\eqref{ff}, and the second
factor to an exponential law with mean~$1/\mu$, by Lemma~\ref{deeptrap}.
Moreover, the second factor is independent of the first. To understand the third factor,
recall from the discussion of the second factor that $\ell_{j+1}(x_j)$ converges to infinity in
probability. Thus, by the weak law of large numbers, the third factor converges to one in
probability. Hence, the product~$S^{\ssup N}_j/vN$ converges to the product of an independent 
Pareto and exponential law.
\end{proof}
\bigskip

\begin{lemma}\label{tconverges}
For $N\uparrow\infty$,
$$\frac{U^{\ssup N}_j}{vN} \Longrightarrow U_j,$$
where $U_j$ is a random variable with  
$$\prob\{U_j>x\} \sim c \,\frac{\mu^\alpha}{x^\alpha} 
\qquad \mbox{ as } x\uparrow\infty,$$
for some $c>0$ depending only on~$\alpha$.
\end{lemma}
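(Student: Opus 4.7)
The plan is to compute the Laplace transform of $U_j^{(N)}/(vN)$, identify its limit, and extract the claimed tail via a Tauberian theorem. I would start by writing $U_j^{(N)} = \sum_{x_{j-1}<z<x_j}\tau_z\sum_{i=1}^{L_j(z)}\tilde\eta_i(z)$, where~$L_j(z)$ is the local time at~$z$ of~$\mathcal{S}^{(j)}$ during its final excursion~$(n_m,\zeta^{(j)})$. Conditional on the deep-trap skeleton, the traps~$\tau_z$ for $x_{j-1}<z<x_j$ are iid with the law of~$\tau$ conditioned on $\{\tau\leq vN^{1-\beta}\}$, independent of the walk. Integrating out the exponentials yields the product $\prod_z(1+\lambda\tau_z/(vN))^{-L_j(z)}$. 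To integrate out the truncated traps, I would use the identity $(1+u)^{-L}=\int_0^\infty e^{-(1+u)s}s^{L-1}/\Gamma(L)\,ds$ to reduce each factor to a Laplace transform of~$\tau$ and then invoke Lemma~\ref{taucut} with $y=vN^{1-\beta}$. In the regime where $L_j(z)/N^\beta\to\ell$ is of order one, this produces per site~$z$ a correction of order $M^{-\alpha}:=(vN^{1-\beta})^{-\alpha}$ times an explicit function~$\Phi(\lambda\ell)$.

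Next, I would average the product over the walk. By Lemma~\ref{typdist} the number of factors is $x_j-x_{j-1}\sim v^\alpha N^{\alpha(1-\beta)}=M^\alpha$, exactly cancelling the per-term correction. An analog of Lemma~\ref{schritt3} applied to the local-time profile $(L_j(z)/N^\beta)$ then replaces the sum by its expectation against the limit distribution of bulk local times of the walk conditioned not to return to~$x_{j-1}$. A Doob $h$-transform argument shows this limit is exponential with rate~$\mu$: with $h(z)=P_z(\text{hit }x_j\text{ before }x_{j-1})$ one has $h\equiv 1$ outside a boundary layer of width $\sim N^\beta/\mu$ near~$x_{j-1}$, and this layer contains only $o(M^\alpha)$ sites since $\beta<\alpha(1-\beta)$. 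The limiting Laplace transform $\varphi(\lambda)=\exp(-\Psi(\lambda;\mu))$ is then $v$-independent, consistent with the statement.

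Finally, the tail $P(U_j>x)\sim c\mu^\alpha/x^\alpha$ is obtained by identifying $\Psi(\lambda;\mu)\sim c\,\Gamma(1-\alpha)\mu^\alpha\lambda^\alpha$ as $\lambda\downarrow 0$ and invoking Karamata's Tauberian theorem (see Feller~\cite{Fe71}, XIII.5). The main obstacle is the clean computation of~$\Psi$ and identification of the constant~$c$. Two technical points need care: controlling the boundary-layer contribution from the Doob-transformed walk near~$x_{j-1}$; and handling the limit law of the rescaled shallow traps, which after rescaling $\tau_z/M$ yields a Poisson point process on~$(0,1)$ with intensity $\alpha y^{-\alpha-1}\,dy$, whose joint law with the exponential local-time marks determines the small-$\lambda$ expansion of $\Psi$.
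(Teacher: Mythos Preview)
Your overall strategy matches the paper's: compute the Laplace transform of $U_j^{(N)}/(vN)$, integrate out the exponentials to get a product over sites, use Lemma~\ref{taucut} on each truncated trap, average the resulting function of the local times via (an analogue of) Lemma~\ref{schritt3}, and finish with the Tauberian theorem from \cite{Fe71}, XIII.5. However, there is a genuine error in how you handle the gap length $x_j-x_{j-1}$.

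You write that ``by Lemma~\ref{typdist} the number of factors is $x_j-x_{j-1}\sim v^\alpha N^{\alpha(1-\beta)}=M^\alpha$, exactly cancelling the per-term correction,'' and conclude that the limiting Laplace transform has the form $\varphi(\lambda)=\exp(-\Psi(\lambda;\mu))$. But Lemma~\ref{typdist} does \emph{not} say that $(x_j-x_{j-1})/M^\alpha\to 1$; it says this ratio converges in law to an exponential random variable of mean~$1$, and this randomness survives in the limit. Concretely, after your per-site expansion and the local-time averaging, the conditional Laplace transform given the gap is asymptotic to
\[
\exp\Big\{\frac{x_j-x_{j-1}}{v^\alpha N^{\alpha(1-\beta)}}\int_0^\infty\psi_\lambda(y)\,\mu e^{-\mu y}\,dy\Big\},
\]
and one must still average over the random gap. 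Doing so gives
\[
\int_0^\infty e^{-u}\exp\Big\{u\int_0^\infty\psi_\lambda(y)\,\mu e^{-\mu y}\,dy\Big\}\,du
=\Big(1-\int_0^\infty\psi_\lambda(y)\,\mu e^{-\mu y}\,dy\Big)^{-1},
\]
which the paper evaluates explicitly as
\[
\varphi(\lambda)=\Big(\frac{\alpha\pi}{\sin\alpha\pi}\Big(\frac{\lambda}{\mu}\Big)^\alpha
+\alpha\int_1^\infty\frac{dz}{z^{\alpha+1}(1+z\lambda/\mu)}\Big)^{-1}.
\]
This is not of the form $\exp(-\Psi)$. The tail claim happens to survive, because near $\lambda=0$ one still has $1-\varphi(\lambda)\sim\Gamma(1-\alpha)\Gamma(1+\alpha)(\lambda/\mu)^\alpha$, so the Tauberian step goes through; but your stated form of~$\varphi$ is wrong, and you have also skipped the verification (done in the paper via continuity at $\lambda=0$) that the limit is a bona fide Laplace transform.

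A secondary remark: the Doob $h$-transform detour is unnecessary. The paper applies Lemma~\ref{schritt3} directly to the local times $\ell_j(x)/N^\beta$, whose limit law is already exponential with rate~$\mu$; the difference between these and your final-excursion local times~$L_j(z)$ is controlled by the excursions counted in $R_j^{(N)}$, which are negligible by Lemma~\ref{rconverges}(a). This is the content of the boundary-layer estimate you mention, and it is absorbed into a single ``$\sim$'' at the start of the paper's computation rather than requiring a conditioned walk.
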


\begin{proof}
Recall that 
$$U^{\ssup N}_j = \sum_{i=n_k+1}^{\zeta^{(j)}-1} \tau_{\s^{(j)}_i}\, \eta_i\big( \s^{(j)}_i\big).$$
Conditional on $x_{j-1}$, $x_j$ the random variables $\tau_x$, $x=x_{j-1}+1,\ldots,x_j-1$,
are still independent with $\tau_x$ conditioned to satisfy $\tau_x\le vN^{1-\beta}$.

We first consider the case $\beta>0$. Writing Laplace transforms
$$\begin{aligned}
\me\exp & \big\{ -\lambda \sfrac{U^{(N)}_j}{vN} \big\} \\
& \sim \me E\Big[ \prod_{x=x_{j-1}+1}^{x_j-1} {\sf E} \exp\big\{ - \sfrac{\lambda}{vN} \, \tau_x\, \sum_{i=1}^{\ell_j(x)} \eta_i(x) \big\} \Big]\\
& = \me\Big[ \prod_{x=x_{j-1}+1}^{x_j-1} E \big[\big(1+\sfrac{\lambda\tau_x}{vN}\big)^{-\ell_j(x)}\mid \tau_x\le vN^{1-\beta}\big] \Big]\\
& = \me \prod_{x=x_{j-1}+1}^{x_j-1} E\Big[\exp\big\{  - \sfrac{\lambda}{vN}  \tau_x\ell_j(x) \, \big(1+O(N^{-\beta})\big)\big\}\, 
\,\Big| \, \tau_x\le vN^{1-\beta}\Big] , 
\end{aligned}$$
where in the last step we have used that, on the event $\{\tau_x\le vN^{1-\beta}\}$,
$$\begin{aligned}
\big(1+\sfrac{\lambda\tau_x}{vN}\big)^{-\ell_j(x)} & = 
\exp\big\{-\ell_j(x)\, \log\big(1+\sfrac{\lambda \tau_x}{vN}\big)\big\}\\
& = \exp\big\{-\sfrac{\lambda}{vN} \tau_x\ell_j(x)\,\big(1+O(N^{-\beta})\big) \big\}.
\end{aligned}$$
By Lemma~\ref{taucut}, we have
$$
\begin{aligned}
E\big[\exp\big\{  & - \sfrac{\lambda}{vN}  \tau_x\ell_j(x) \, \big(1+O(N^{-\beta})\big)\big\}\, 
\,\big| \, \tau_x\le vN^{1-\beta}\big]\\
& = 1 + \frac{1+o(1)}{N^{\alpha(1-\beta)}v^\alpha}\, \psi_\lambda\big(\sfrac{\ell_j(x)}{N^\beta}\big).
\end{aligned}$$
for
$$\psi_\lambda(y)=
(1-\Gamma(1-\alpha))\lambda^\alpha y^\alpha-
\int_1^\infty e^{-\lambda y z} \sfrac{\alpha}{z^{\alpha+1}}\, dz.$$
By Lemma~\ref{schritt3} and \eqref{s3.2} we have
$$\sfrac1{x_j-x_{j-1}}\, \sum_{x=x_{j-1}}^{x_{j}-1} \psi_\lambda\big(\sfrac{\ell_j(x)}{N^\beta}\big)
\Rightarrow \int \psi_\lambda(y)\, \mu e^{-\mu y}\, dy.$$
Altogether,
$$\begin{aligned}
\lim_{N\uparrow\infty} &\me\exp  \big\{ -\lambda \sfrac{U^{(N)}_j}{vN} \big\} \\
&= \lim_{N\uparrow\infty}\me \exp\Big\{ \sfrac{x_j-x_{j-1}}{v^\alpha N^{\alpha(1-\beta)}} \int_0^\infty \psi_\lambda(y)\, \mu e^{-\mu y}\, dy\, (1+o(1))\Big\}\\
& = \int_0^\infty e^{-u} \exp\Big\{ u \int_0^\infty \psi_\lambda(y)\, \mu e^{-\mu y}\, dy \Big\} \, du \\
& = \Big( \int_0^\infty \Gamma(1-\alpha)\lambda^\alpha y^\alpha \, \mu e^{-\mu y}\, dy +
\int_0^\infty \int_1^\infty e^{-\lambda y z} \sfrac{\alpha}{z^{\alpha+1}}\, dz\, \mu e^{-\mu y}\, dy \Big)^{-1}\\
& = \Big(  \sfrac{\alpha\pi}{\sin \alpha \pi} \big(\sfrac\lambda\mu\big)^\alpha + 
\alpha \int_1^\infty \sfrac{dz}{z^{\alpha+1} (1+ z \frac\lambda\mu)} \Big)^{-1}.
\end{aligned}$$
The limit is continuous at $\lambda=0$ and hence, by Bochner's theorem, see e.g. Theorem~5.22 in~\cite{Ka02}, it
is the Laplace transform of some random variable~$U_j$.  Theorem~4 in XIII.5 of~\cite{Fe71}
implies the statement about the tail behaviour.

Assume now that $\beta=0$. Set $\theta_j(x)=\sum_{i=1}^{\ell_j(x)}\eta_i(x)$. Then
$$\begin{aligned}
\me\exp & \big\{ -\lambda \sfrac{U^{(N)}_j}{vN} \big\} \\
& \sim \me{\sf E}E \Big[ \prod_{x=x_{j-1}+1}^{x_j-1}  \exp\big\{ - \sfrac{\lambda}{vN} \, \tau_x\, \theta_j(x) \big\} \Big]\\
& = \me{\sf E}\Big[ \prod_{x=x_{j-1}+1}^{x_j-1} E \big[\exp\big\{ - \sfrac{\lambda}{vN} \, \tau_x\, \theta_j(x) \big\}\mid \tau_x\le vN\big] \Big], 
\end{aligned}$$
Using Lemma~\ref{taucut} once again, we get
$$
E \big[\exp\big\{ - \sfrac{\lambda}{vN} \, \tau_x\, \theta_j(x) \big\}\mid \tau_x\le vN\big] \Big]
= 1 + \frac{1+o(1)}{N^{\alpha}v^\alpha}\, \psi_\lambda\big(\theta_j(x)\big).
$$
Repeating the arguments of Lemma~\ref{schritt3}, one can easily see that
$$
\sfrac1{x_j-x_{j-1}}\, \sum_{x=x_{j-1}}^{x_{j}-1} \psi_\lambda\big(\theta_j(x)\big)
\Rightarrow \me \psi_\lambda(\theta),$$
where $\theta=\sum_{i=1}^{\ell^{\ssup \infty}(0)}\eta_i(0)$. Since $\ell^{\ssup \infty}(0)$ is geometrically distributed,
$\theta$ is exponentially distributed with mean $1/\mu$. Thus,
$$
\me \psi_\lambda(\theta)=\int \psi_\lambda(y)\, \mu e^{-\mu y}\, dy.
$$
This means that the remaining part of the proof coincides with that for the case $\beta>0$.
\end{proof}
\medskip

Recall that $R^{\ssup N}_j=|{\mathcal R}^{\ssup N}_j|$. 
Then $$\frac 1N\Big(\sum_{i=1}^{j-1} (U^{\ssup N}_i+S^{\ssup N}_i+R^{\ssup N}_i) + U^{\ssup N}_j\Big)$$ is the total time
$X^{\ssup N}$ takes to hit $x_{j}$. 
For the lower bound in Theorem~\ref{ageing1} we use Lemmas~\ref{rconverges},~\ref{sconverges}, and~\ref{tconverges},
and get, for any $M>0$,
$$\begin{aligned}
\prob &\,\Big\{ \tau_{X^{(N)}_t/N^{1-\beta}} > v \Big\}\\
& \ge \sum_{j=1}^M \Big( \prob\Big\{ \sum_{i=1}^{j-1} (U^{\ssup N}_i+S^{\ssup N}_i+R^{\ssup N}_i) + U^{\ssup N}_j \\
& \qquad\qquad\qquad\qquad < Nt \le \sum_{i=1}^j (U^{\ssup N}_i+S^{\ssup N}_i+R^{\ssup N}_i) \Big\}
 - \prob\big\{ Nt\in {\mathcal R}^{\ssup N}_j \big\} \Big)\\
& \longrightarrow \sum_{j=1}^M \prob\Big\{ \sum_{i=1}^{j-1} (U_i+S_i) + U_j
< \frac{t}{v} \le \sum_{i=1}^j (U_i+S_i) \Big\},  \quad\mbox{ as $N\uparrow\infty$,}
\end{aligned}$$
and we get the required lower bound by letting $M\uparrow\infty$.
\smallskip

For the upper bound, we have, for any $M>0$,
$$\begin{aligned}
\prob &\,\Big\{ \tau_{X^{(N)}_t/N^{1-\beta}} > v \Big\}\\
& \le \sum_{j=1}^M \prob\Big\{ \sum_{i=1}^{j-1} (U^{\ssup N}_i+S^{\ssup N}_i+R^{\ssup N}_i) + U^{\ssup N}_j 
< Nt \le \sum_{i=1}^j (U^{\ssup N}_i+S^{\ssup N}_i+R^{\ssup N}_i) \Big\}\\
& \qquad\qquad\qquad\qquad + \prob\Big\{ \sum_{i=1}^{M} (U^{\ssup N}_i+S^{\ssup N}_i+R^{\ssup N}_i) + U^{\ssup N}_{M+1} 
< Nt \Big\}\\
& \longrightarrow \sum_{j=1}^M \prob\Big\{ \sum_{i=1}^{j-1} (U_i+S_i) + U_j
< \frac{t}{v} \le \sum_{i=1}^j (U_i+S_i) \Big\}\\
& \qquad\qquad\qquad\qquad + \prob\Big\{ \sum_{i=1}^{M} (U_i+S_i)+U_{M+1}< \frac{t}{v} \Big\}, \quad\mbox{ as $N\uparrow\infty$,}
\end{aligned}$$
and, as $M\uparrow\infty$ the additional term on the right converges to zero, because
$U_i+S_i$ are independent, nonnegative random variables. This completes the proof
of the first statement in Theorem~\ref{ageing1}.
\medskip

For the second statement we evaluate the probability of the process $X^{\ssup N}$ staying put
conditional on the environment as
$$\begin{aligned}
\prob\big\{ X^{\ssup N}_{t+s}=X^{\ssup N}_{t} \mbox{ for all } 0\le s\le N^{-\beta} \mid 
(\tau_z, z\in \Z^d), X_t^{\ssup N} \big\} 
 = \exp\Big\{- \frac{N^{1-\beta}}{\tau_{X^{\ssup N}_t}} \Big\}.
\end{aligned}$$
As the right hand side is a continuous and bounded function of $\tau_{X^{\ssup N}_t}/N^{1-\beta}$,
we obtain from the first statement that
$$\lim_{N \to \infty} \me \exp\Big\{- \frac{N^{1-\beta}}{\tau_{X^{\ssup N}_t}} \Big\}
= \me \exp\{-1/\xi_t\},$$
which is the second statement of Theorem~\ref{ageing1}.

\section{Proof of Theorem~\ref{ageing2}}\label{ch.age2}

We follow the framework of~\cite{FIN} and start with a discussion of the notion of 
convergence of atomic measures in the \emph{point process sense}, which is crucial 
for this argument. Let
$$\nu^{\ssup N}=\sum_{i\in\Z} w_i^{\ssup N} \delta_{y_i^{(N)}},
\qquad \nu=\sum_{i\in\Z} w_i \delta_{y_i}$$ 
be atomic measures. If, for every open set $G\subset\R\times (0,\infty)$ whose 
closure in $\R\times (0,\infty)$ is compact with $\rho(\partial G)=0$, we have, 
for all sufficiently large~$N$, 
$$\#\big\{ (y^{\ssup N}_i, w^{\ssup N}_i) \in G \big\}= \#\big\{ (y_i, w_i) \in G \big\}$$
we say that $\nu^{\ssup n}\rightarrow \nu$ in the point process sense. 

\begin{lemma}\label{FINresult}
Suppose that $\nu^{\ssup N}\rightarrow \nu$ in the point process sense and the scale functions $S^{\ssup N}$ converge uniformly 
on compact intervals to the identity then, for any $t>0$,
$$
\nu^{\ssup N}(\{Y[\nu^{\ssup N},S^{\ssup N}](t)\}) \Longrightarrow \nu(\{Y[\nu,{\rm id}](t)\})
\quad\hbox{ for}\, N \to \infty.$$
\end{lemma}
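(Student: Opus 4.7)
The plan is to lift the convergence to the almost-sure setting via Skorokhod's representation and then track every object entering the definition of $Y[\nu,S]$ through to the limit. First I would choose a common probability space on which $\nu^{(N)}\to\nu$ in the point process sense almost surely, fix one realisation of the Brownian motion $B$ independent of the environments, and recall that the convergence $S^{(N)}\to\mathrm{id}$ uniformly on compacts is deterministic.

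The core analytic step is to show $\phi^{(N)}(s)\to\phi(s)$ for each $s\ge 0$ almost surely. Since $B$ has compact range on any $[0,T]$, the local time $\ell(s,\cdot)$ is supported in a bounded interval; combining this with the joint continuity of $(s,x)\mapsto\ell(s,x)$ and the point process convergence of $\nu^{(N)}$ on any compact window $K\times[\varepsilon,\infty)$, a standard small-weight truncation argument yields
\[
\phi^{(N)}(s)=\sum_i w_i^{(N)}\,\ell\bigl(s,S^{(N)}(y_i^{(N)})\bigr)\;\longrightarrow\;\sum_i w_i\,\ell(s,y_i)=\phi(s).
\]
The function $\phi$ is a.s.\ continuous and strictly increasing on the closure of its support of growth, so its right-continuous inverse $\psi$ has only countably many discontinuities; the fixed $t$ of the statement is therefore almost surely a continuity point of $\psi$, giving $\psi^{(N)}(t)\to\psi(t)$, and by continuity of the Brownian path $B(\psi^{(N)}(t))\to B(\psi(t))$.

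To conclude, I would invoke the standard FIN fact that for fixed $t$ the position $B(\psi(t))$ almost surely coincides with an atom $y_{i^\ast}$ of $\nu$ (because $\phi$ grows only while $B$ sits at an atom, and a fixed $t$ is a.s.\ in the interior of one of the corresponding flat pieces of $\psi$). Picking a neighbourhood of $y_{i^\ast}$ that separates it from all other atoms of $\nu$ in the relevant compact window, the point process convergence produces, for all large $N$, a unique atom $y_{i^\ast}^{(N)}$ of $\nu^{(N)}$ in that neighbourhood, with weight $w_{i^\ast}^{(N)}\to w_{i^\ast}$. Since by construction $Y[\nu^{(N)},S^{(N)}](t)$ lies in the support of $\nu^{(N)}$ and $(S^{(N)})^{-1}(B(\psi^{(N)}(t)))\to y_{i^\ast}$, for $N$ large we must have $Y[\nu^{(N)},S^{(N)}](t)=y_{i^\ast}^{(N)}$, whence
\[
\nu^{(N)}\bigl(\{Y[\nu^{(N)},S^{(N)}](t)\}\bigr)=w_{i^\ast}^{(N)}\longrightarrow w_{i^\ast}=\nu\bigl(\{Y[\nu,\mathrm{id}](t)\}\bigr).
\]

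The main obstacle will be the second step: the series defining $\phi^{(N)}$ is infinite, and under point process convergence only atoms with weight exceeding a fixed threshold inside a fixed compact interval are controlled. One must therefore argue that the contribution of atoms below the threshold, or outside the window actually visited by $B$ up to time $s$, is uniformly small, using the $\alpha$-stable tail of the weights and continuity of $\ell(s,\cdot)$. Once that analytic convergence is secured, the identification of the limiting atom and the passage to its weight is a routine topological consequence of the point process definition.
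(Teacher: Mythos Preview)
Your approach differs genuinely from the paper's. The paper invokes Theorem~2.1 of \cite{FIN} as a black box: that theorem delivers convergence of the law of $Y[\nu^{\ssup N},S^{\ssup N}](t)$ to that of $Y[\nu,{\rm id}](t)$ both weakly \emph{and in the point process sense}, the latter meaning that for matched atoms $x_i^{\ssup N}\to x_i$ one has $\prob\{Y^{\ssup N}(t)=x_i^{\ssup N}\}\to\prob\{Y(t)=x_i\}$. Combining this with tightness and the point process convergence of the environments, the paper simply sums over the finitely many atoms in a window $B\times(u,v)$ to obtain the weight convergence. You instead try to rederive everything from scratch in an almost-sure framework.

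The first part of your programme (convergence of $\phi^{\ssup N}$, of $\psi^{\ssup N}(t)$ at continuity points, and hence of the position $Y^{\ssup N}(t)$) is essentially the Stone/FIN convergence machinery and can be made rigorous along the lines you sketch; this yields the \emph{weak} convergence half of the cited theorem. The genuine gap is your final step. You claim one can pick a neighbourhood of $y_{i^\ast}$ separating it from all other atoms of $\nu$, and that position convergence then forces $Y^{\ssup N}(t)=y_{i^\ast}^{\ssup N}$. But in the intended application $\nu=\rho$ is the stable random measure, whose atoms are \emph{dense} in $\R$; no such separating neighbourhood in position space exists. Even if you instead isolate $(y_{i^\ast},w_{i^\ast})$ in the point-process space $\R\times(0,\infty)$, point process convergence only guarantees a unique matching atom $y_{i^\ast}^{\ssup N}$ of $\nu^{\ssup N}$ in that window --- it does not prevent $\nu^{\ssup N}$ from having many other small atoms whose positions accumulate at $y_{i^\ast}$. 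Knowing only that $Y^{\ssup N}(t)\to y_{i^\ast}$ in $\R$ cannot rule out that $Y^{\ssup N}(t)$ sits on one of these small nearby atoms rather than on $y_{i^\ast}^{\ssup N}$, so the conclusion $\nu^{\ssup N}(\{Y^{\ssup N}(t)\})\to w_{i^\ast}$ does not follow. Closing this gap is exactly what the point-process convergence of the \emph{laws} (the second conclusion of \cite{FIN}, Theorem~2.1) provides, and is the one nontrivial input the paper borrows rather than reproves.
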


\begin{proof}
By Theorem~2.1 in~\cite{FIN}, the law of $Y[\nu^{\ssup N},S^{\ssup N}](t)$ 
converges to the law of $Y[\nu,{\rm id}](t)$ weakly as well as in the point process sense.
Given an open set $G\subset\R\times (0,\infty)$ as above, let $x_1,\ldots,x_l$ be the
positions of the atoms in~$G$. Then, by Condition~1 in~\cite{FIN}, there exists $N_0$ such that for all $N\geq N_0$ the values
$x^{\ssup N}_1,\ldots,x^{\ssup N}_l$  are the positions of the atoms of $\nu^{\ssup N}$
in~$G$, and
$$\lim_{N\to\infty} x^{\ssup N}_i = x_i \mbox{ and }
\lim_{N\to\infty} \nu^{\ssup N}(\{x^{\ssup N}_i\}) = \nu(\{x_i\}) \quad\mbox{ for all }i\in\{1,\ldots,l\}.$$
Using the convergence of the distributions we further have
$$\lim_{N\to\infty} \prob\big\{ Y[\nu^{\ssup N},S^{\ssup N}](t) = x^{\ssup N}_i \big\}
= \prob\big\{ Y[\nu,{\rm id}](t) = x_i \big\} \quad\mbox{ for all }i\in\{1,\ldots,l\}.$$
Observe now that, because $Y[\nu^{\ssup N},S^{\ssup N}](t)$ converges in law, the sequence is
uniformly tight, more precisely for each $\varepsilon>0$ there exists an open ball $B\subset \R$ with
$$\sup_{N\geq 1} \prob\big\{ Y[\nu^{\ssup N},S^{\ssup N}](t)\not\in B\big\} < \varepsilon\, .$$
Now given $0<u<v$ we let $G=B \times (u,v)$ and assume that $u, v$ are not weights of atoms of $\nu$.
With the notation from above we have
$$\begin{aligned}
\lim_{N\to\infty}
\prob\big\{ \nu^{\ssup N} & (\{Y[\nu^{\ssup N},S^{\ssup N}](t)\}) \in (u,v), Y[\nu^{\ssup N},S^{\ssup N}](t)\in B\big\}\\
& = \lim_{N\to\infty} \sum_{i=1}^l \prob\{ Y[\nu^{\ssup N},S^{\ssup N}](t) = x_i^{\ssup N}\} 
 = \sum_{i=1}^l \prob\{ Y[\nu,{\rm id}](t) = x_i\} \\
& = \prob\big\{ \nu(\{Y[\nu,{\rm id}](t)\}) \in (u,v), Y[\nu,{\rm id}](t)\in B\big\},
\end{aligned}$$
which completes the proof as $\varepsilon>0$ was arbitrary.
\end{proof}

By a classical stable limit theorem, see Proposition~3.1 in~\cite{FIN},
there exists a coupling of the measures $\nu^{\ssup N}$ in the proof of Theorem 2.1~(b) and (c)
such that, almost surely, $\nu^{\ssup N}$ converges  to $\rho$ in the point process sense.
Obviously, $S^{\ssup N}$ converges to the identity uniformly on compact sets, and hence
Lemma~\ref{FINresult} shows that
$$\lim_{N\to\infty} \frac{\tau_{X^{(N)}_t}}{N^{\frac1{\alpha+1}}}=
\lim_{N\to\infty} \nu^{\ssup N}(\{Y[\nu^{\ssup N},S^{\ssup N}](t)\})=\rho(Y[\rho,{\rm id}])(t)
= \rho(\mathsf{Fin}^{\theta}_t)
\quad\mbox{ in law.}$$
The ageing result follows by the same argument as in Theorem~2.2.
\bigskip

{\bf Acknowledgements:}
The second author was supported by an Advanced Research Fellowship of the Engineering and Physical Sciences 
Research Council (EPSRC). The work was started when the first and the third author visited the Department of Mathematical Sciences, University of Bath, continued when the second author visited the Technische Universit{\"a}t M{\"u}nchen, 
and it was completed when the second and third author visited the Isaac Newton
Institute at Cambridge. All this support is gratefully acknowledged.

\bigskip
\noindent
Nina Gantert\\
Institut f\"ur Mathematische Statistik\\
Universit\"at M\"unster\\
Einsteinstr. 62  \\
D-48149 M\"unster, Germany\\
{\tt gantert@math.uni-muenster.de}\\

\medskip
\noindent
Peter M\"orters\\
Department of Mathematical Sciences\\
University of Bath\\
Claverton Down\\
Bath BA2 7AY, United Kingdom\\
{\tt maspm@bath.ac.uk}\\

\medskip
\noindent
Vitali Wachtel\\
Mathematisches Institut der Universit\"at M\"unchen\\
Theresienstr. 39\\
D-80333 M\"unchen, Germany\\
{\tt Vitali.Wachtel@mathematik.uni-muenchen.de}\\

\end{document}